\newtheorem{thm}{Theorem}[section]
\newtheorem{case}[]{Case}
\newtheorem{subcase}[]{Subcase}[case]
\newtheorem{Subcase}[]{Subcase}[subcase]
\newtheorem{claim}[]{Claim}
\def\dfn#1{{\sl #1}}
\begin{document} 
\title{Gallai-Ramsey numbers for graphs with five vertices\\
	 of chromatic number four}
	
\author{Qinghong Zhao\thanks{Corresponding Author. Supported by the Summer Graduate Research Assistantship Program of Graduate School. E-mail address: qzhao1@olemiss.edu (Q. Zhao).}~~and~Bing Wei\thanks{Supported in part by the summer faculty research grant from CLA at the University of Mississippi. E-mail address: bwei@olemiss.edu (B. Wei).}\\ 
\small Department of Mathematics, University of Mississippi, University, MS 38677,  USA}
	
	\date{}
	\maketitle
\begin{abstract}
Given a graph $H$ and an integer $k\ge1$, the Gallai-Ramsey number $GR_k(H)$ is defined to be the minimum integer $n$ such that every $k$-edge coloring of $K_n$ contains either a rainbow (all different colored) triangle or a monochromatic copy of $H$. In this paper, we determine the Gallai-Ramsey numbers for connected graphs with five vertices of chromatic number four.\\
		
\noindent{\bf Key words}: Gallai coloring, Gallai-Ramsey number, Rainbow triangle\\
\noindent{\bf 2010 Mathematics Subject Classification}: 05C55; 05D10; 05C15
\end{abstract}
	
\section{Introduction}
In this paper, we only deal with finite, simple and undirected graphs. Given a graph $G$ and the vertex set $V(G)$, let $|G|$ denote the number of vertices of $G$ and $G[W]$ denote the subgraph of $G$ induced by a set $W\subseteq V(G)$. Given disjoint vertex sets $X,Y\subseteq V(G)$, if each vertex in $X$ is adjacent to all vertices in $Y$ and all the edges between $X$ and $Y$ are colored with the same color, then we say that $X$ is \dfn{$mc$-adjacent} to $Y$, that is, $X$ is \dfn{blue-adjacent} to $Y$ if all the edges between $X$ and $Y$ are colored with blue. We use $P_n$ and $K_n$ to denote the path and complete graph on $n$ vertices, respectively. For an integer $k\ge1$, we define $[k]=\{1,\ldots,k\}$.\medskip
	
The complete graphs under edge coloring without rainbow triangle usually have pretty interesting and somehow beautiful structures. In 1967, Gallai studied the structure under the guise of transitive orientations and obtained the following result \cite{Gallai} which was restated in \cite{Gy} in the terminology of graphs. 
\begin{thm}[\cite{Gallai},\cite{Gy}]\label{Gallai}
For a complete graph $G$ under any edge coloring without rainbow triangle, there exists a partition of vertices (called a Gallai-partition) with parts $V_1, V_2, \dots, V_\ell$, $\ell\ge2$, such that there are at most two colors on the edges between the parts and only one color on the edges between each pair of parts.
\end{thm}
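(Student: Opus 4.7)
The plan is to proceed by induction on $n=|V(G)|$. For the base case $n=2$, the partition $V_1=\{v_1\}$, $V_2=\{v_2\}$ trivially satisfies the conclusion. For the inductive step with $n\ge 3$, if at most two colors appear anywhere in $G$ then partitioning $V(G)$ into $n$ singletons already fulfills the requirements, so I may assume at least three distinct colors occur.

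The key preparatory step is to exhibit a color $c$ such that either the color-$c$ subgraph $G_c$ or its complement $\overline{G_c}$ in $K_n$ is disconnected. The easier half first: if some $\overline{G_c}$ is disconnected with components $D_1,\dots,D_t$, then every edge between different $D_i$'s must be of color $c$, and $\{D_1,\dots,D_t\}$ is already a Gallai partition using the single color $c$ between pairs of parts. Otherwise, pick $c$ with $G_c$ disconnected and components $C_1,\dots,C_m$, $m\ge 2$. The main lemma in this case is a \emph{color uniformity lemma}: for all $i\ne j$, every edge of $G$ between $C_i$ and $C_j$ has a single color, call it $\chi(C_i,C_j)$. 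The proof is by alternating-path propagation: if two such edges $xy$ and $x'y'$ had distinct colors $\alpha$ and $\beta$, then traversing a color-$c$ path from $x$ to $x'$ inside $C_i$ and examining successive triangles formed with $y$ would force, at some edge of the path, a triangle carrying three distinct colors $c,\alpha,\gamma$ for some $\gamma\notin\{c,\alpha\}$; here one crucially uses that edges between $C_i$ and $C_j$ cannot be of color $c$.

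With the uniformity lemma in hand, I would form the auxiliary edge-colored complete graph on the vertex set $\{C_1,\dots,C_m\}$ using the coloring $\chi$. Any rainbow triangle in this auxiliary graph would pull back to a rainbow triangle in $G$ by taking any three representatives, so the auxiliary coloring is itself rainbow-triangle-free. Since color $c$ is actually used, at least one component has size $\ge 2$, hence $m\le n-1$ and the induction hypothesis applies to yield a Gallai partition $\{P_1,\dots,P_\ell\}$ of $\{C_1,\dots,C_m\}$ using at most two colors between parts. Taking $V_r=\bigcup_{C_j\in P_r}C_j$ for $r=1,\dots,\ell$ then gives the desired Gallai partition of $V(G)$.

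The main obstacle is guaranteeing the existence of a color $c$ with $G_c$ or $\overline{G_c}$ disconnected whenever at least three colors are used. To establish this I would fix a vertex $v$ and partition $V(G)\setminus\{v\}$ into color-neighborhoods $N_c(v)=\{u:uv\text{ has color }c\}$. The rainbow-triangle-free hypothesis forces, for $c\ne c'$, every edge between $N_c(v)$ and $N_{c'}(v)$ to carry color $c$ or $c'$, which sharply constrains the global color pattern. A careful case analysis from this starting point, using the three-color hypothesis, produces either a color whose subgraph is disconnected or a color whose complement is; this structural step is the technical heart of the theorem, whereas the uniformity lemma and the induction that follows it are comparatively routine.
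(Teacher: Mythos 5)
This theorem is not proved in the paper at all: it is Gallai's classical structure theorem, quoted with citations to Gallai (1967) and Gy\'arf\'as--Simonyi (2004), so there is no in-paper argument to compare against and your attempt must stand on its own. Its skeleton is sound. If some used color $c$ has $\overline{G_c}$ disconnected you are done immediately; if $G_c$ is disconnected, your propagation argument is correct (for a $c$-edge $xx'$ inside a component $C_i$ and any $y\in C_j$, the triangle $xx'y$ has exactly one $c$-edge, forcing $c(xy)=c(x'y)$; propagating along $c$-paths in $C_i$ and then symmetrically in $C_j$ gives a single color between $C_i$ and $C_j$), the reduced coloring on the components inherits rainbow-triangle-freeness, and since $c$ is actually used some component has two vertices, so $m\le n-1$ and the induction closes.

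The genuine gap is precisely the step you yourself call the technical heart and then do not carry out: that every Gallai coloring using at least three colors admits a used color $c$ with $G_c$ or $\overline{G_c}$ disconnected. This is not a routine case analysis; it is essentially equivalent to the theorem. It follows \emph{from} the theorem (in a Gallai partition with $\ell\ge2$ parts, at most two colors cross between parts, so any third used color lies entirely inside parts and spans a disconnected graph), and your induction shows it \emph{implies} the theorem, so all of Gallai's content has been relocated into the unproved lemma. The preparatory observations you make are correct but far from sufficient: one may assume every color meets every vertex (else that color's class has an isolated vertex), and edges between $N_c(v)$ and $N_{c'}(v)$ carry color $c$ or $c'$, but already on $K_4$ with three colors one finds colorings in which one color class is connected and only the other two disconnect, so identifying \emph{which} class disconnects requires a genuine global argument. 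The published proofs obtain this via Gallai's theory of transitive orientations / modular decomposition or a separate induction on the number of colors; until that lemma is supplied, the proof is incomplete.
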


We define $\mathcal{G}=G[\{v_1,\dots,v_\ell\}]=K_{\ell}$ as a \dfn{reduced graph} of $G$, where $v_i\in V_i$ for $i\in[\ell]$. Obviously, there exists a monochromatic copy of $H$ in $\mathcal{G}$ if $\ell\ge R_2(H)$, which leads to a monochromatic copy of $H$ in $G$. In honor of Gallai's result, the edge coloring of a complete graph without rainbow triangle is called \dfn{Gallai coloring}. We use $(G,c)$ to denote a complete graph $G$ under the Gallai coloring $c: E(G)\xrightarrow{} [k]$. Given graphs $H_1,\ldots, H_k$ and an integer $k \ge 1$, the \dfn{Ramsey number} $R(H_1,\ldots, H_k)$ is defined to be the minimum integer $n$ such that every $k$-edge coloring of $K_n$ contains a monochromatic copy of $H_i$ in color $i\in [k]$, and the \dfn{Gallai-Ramsey number} $GR(H_1,\ldots,H_k)$ is defined to be the minimum integer $n$ such that every Gallai $k$-coloring of $K_n$ contains a monochromatic copy of $H_i$ in color $i\in [k]$. We simply write $R_k(H_1)$ and $GR_k(H_1)$ when $H_1=\dots=H_k$ and $R((k-r)H_{r+1},rH_1)$ and $GR_k((k-r)H_{r+1},rH_1)$ when $H_1=\dots=H_r$ and $H_{r+1}=\dots=H_k$. Similar to the notation $GR_k((k-s-r)H,sF,rK)$. Clearly, $GR_2(H,F)=R(H,F)$ and $GR_k(H) \leq R_k(H)$ for $k\ge1$. However, determining the exact value of $GR_k(H)$ for a graph $H$ is far from trivial, even for a small graph. The general behavior of $GR_k(H)$ for all graphs $H$ was established in \cite{exponential}.
\begin{thm}[\cite{exponential}]
Let $H$ be a fixed graph  with no isolated vertices. Then $GR_k(H)$ is exponential in $k$ if $H$ is not bipartite, linear in $k$ if $H$ is bipartite but not a star, and constant (does not depend on $k$) when $H$ is a star.		
\end{thm}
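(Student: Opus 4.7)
The plan is to treat the three classes of $H$ separately, in each case starting from a Gallai partition provided by Theorem~\ref{Gallai} and exploiting that only two colors appear between its parts.

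\emph{Star case} ($H=K_{1,m}$). Suppose $(G,c)$ is a Gallai $k$-coloring of $K_n$ with no monochromatic $K_{1,m}$, and take a Gallai partition $V_1,\dots,V_\ell$ with $\ell\ge 2$. Fix any vertex $u\in V_i$; every edge from $u$ to $V(G)\setminus V_i$ carries one of the at most two between-part colors, and in each of those colors $u$ has at most $m-1$ neighbors, so $|V(G)\setminus V_i|\le 2(m-1)$ and hence $|V_i|\ge n-2(m-1)$. Summing over the parts gives $n\ge\ell(n-2(m-1))\ge 2(n-2(m-1))$, so $n\le 4(m-1)$, yielding $GR_k(K_{1,m})\le 4m-3$ independent of $k$. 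The trivial lower bound $GR_k(K_{1,m})\ge m+1$ is provided by $K_m$ under any coloring.

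\emph{Non-bipartite case}. Since $\chi(H)\ge 3$, one has $R_2(H)\ge 4$. For the lower bound I would iterate a blow-up: begin with a $2$-coloring of $K_{R_2(H)-1}$ avoiding monochromatic $H$, replace each vertex by a fresh copy of the same construction using two new colors, and repeat for $\lfloor k/2\rfloor$ rounds, producing a Gallai $k$-coloring on $(R_2(H)-1)^{\lfloor k/2\rfloor}$ vertices with no monochromatic $H$. For the upper bound I would induct on $k$: in a Gallai $k$-coloring of $K_n$ avoiding $H$, the reduced graph of a Gallai partition is a $2$-colored $K_\ell$ with no monochromatic $H$, so $\ell\le R_2(H)-1$; combined with a careful analysis inside the parts (removing the two between-part colors from the largest part when this is possible), this yields a recursion of the form $GR_k(H)\le (R_2(H)-1)\cdot GR_{k-1}(H)+O(1)$, hence $GR_k(H)\le C_H^{\,k}$ for a constant $C_H$ depending only on $H$.

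\emph{Bipartite non-star case}. Because the target upper bound is only linear in $k$, the multiplicative recursion used in the non-bipartite case is far too wasteful. For the lower bound I would concatenate $\Theta(k)$ near-extremal $2$-colored blocks, devoting one fresh color to the edges joining each new block to everything already built, which yields a Gallai $k$-coloring on $\Theta(k)$ vertices with no monochromatic $H$. For the upper bound the key extra input, special to bipartite non-stars, is that each new color should contribute only an additive $O_H(1)$ to $n$ rather than a multiplicative factor. Translating the combinatorial condition ``$H$ is bipartite but not a star'' into a structural statement strong enough to force this additive behavior through every Gallai partition is, I expect, the main obstacle of the whole proof; it seems to require a dedicated pigeonhole argument on color-degrees layered on top of Theorem~\ref{Gallai}, rather than Theorem~\ref{Gallai} alone.
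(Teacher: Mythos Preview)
This theorem is not proved in the present paper; it is quoted from~\cite{exponential} without argument, so there is no in-paper proof to compare against. Assessing your sketch on its own merits: the star case is complete and correct. In the non-bipartite case, however, your lower-bound construction has a real gap. Replacing each vertex of an $H$-free $2$-coloring of $K_{R_2(H)-1}$ by a copy of the previous stage need not remain $H$-free, because a monochromatic copy of $H$ in one of the two new colors only requires a graph homomorphism from $H$ into that color class of the reduced graph, not a subgraph embedding. Concretely, for $H=C_5$ every $2$-coloring of $K_{R_2(C_5)-1}$ avoiding a monochromatic $C_5$ still contains a monochromatic triangle (for $m\ge 6$ this is $R_2(K_3)=6$; for $m=5$ the unique triangle-free $2$-coloring has both classes equal to $C_5$), and since $C_5\to K_3$ the blow-up acquires a monochromatic $C_5$ in that new color. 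The standard repair is to blow up instead a $2$-coloring of $K_{(\chi(H)-1)^2}$ in which both color classes are $(\chi(H)-1)$-partite; this still yields an exponential lower bound. Your non-bipartite upper bound is likewise not established: both between-part colors may well appear inside the largest block, so the step ``remove a color and recurse on $k-1$'' does not go through as written, and you supply no alternative mechanism for decreasing $k$.

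In the bipartite non-star case you explicitly leave the upper bound open, and the lower-bound sketch also needs adjustment: if a block has at least as many vertices as the smaller side of a bipartition of $H$, then the complete bipartite graph carried by the fresh color joining that block to earlier ones already contains a monochromatic $H$. Taking single-vertex blocks makes every fresh color class a star and gives $GR_k(H)\ge k+2$, which suffices for the qualitative statement; obtaining a larger linear constant requires keeping every block strictly smaller than the smaller side of $H$.
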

Fox, Grinshpun and Pach~\cite{FGP} posed a conjecture for $GR_k(K_t)$. The cases $t=3,4$ were proved in \cite{chgr},\cite{exponential} and \cite{k4}. Recently, there is a breakthrough for the case $t=5$ in \cite{colt} while there is no any results for the cases $t\ge 6$. For more recent work on Gallai-Ramsey numbers, we refer the readers to \cite{FGP,FMO}. It is known that there are twenty three isolated-free graphs with five vertices, denoted by $\mathscr{H}$. The Gallai-Ramsey numbers for the graphs in $\mathscr{H}$ with chromatic number two and three were completely determined in \cite{4,five,YM,YM1,W2N,HW,17,JZ,ZF,ZQ}. In this paper, we obtain the Gallai-Ramsey numbers for graphs in $\mathscr{H}$ with chromatic number four as follows. Clearly, there are three such graphs (see Fig. 1). 
\begin{center}
	\def\r{6pt}
	\def\dy{1cm}
	\tikzset{c/.style={draw,circle,fill=black,minimum size=\r,inner sep=0pt,
			anchor=center},
		d/.style={draw,circle,fill=black,minimum size=\r,inner sep=0pt, anchor=center}}	
	\scalebox{0.5}{
		\begin{tikzpicture}
		\pgfmathtruncatemacro{\Ncorners}{5}
		\node[ regular polygon,regular polygon sides=\Ncorners,minimum size=3cm] 
		(poly\Ncorners) {};
		\foreach\x in {1,...,\Ncorners}{
			\node[d] (poly\Ncorners-\x) at (poly\Ncorners.corner \x){};
		}	
		\foreach\X in {1,...,\Ncorners}{
			\ifnum\X=2
			\foreach\Y in {1,3,4,5}{
				\draw (poly\Ncorners-\X) -- (poly\Ncorners-\Y);}
			\fi				
			\ifnum\X=5
			\foreach\Y in {3,4}{
				\draw (poly\Ncorners-\X) -- (poly\Ncorners-\Y);}
			\fi
			\ifnum\X=3
			\foreach\Y in {4}{
				\draw (poly\Ncorners-\X) -- (poly\Ncorners-\Y);}
			\fi		
		}	
		\end{tikzpicture}
	}
	\hfil
	\scalebox{0.5}{
		\begin{tikzpicture}
		\pgfmathtruncatemacro{\Ncorners}{5}
		\node[ regular polygon,regular polygon sides=\Ncorners,minimum size=3cm] 
		(poly\Ncorners) {};
		\foreach\x in {1,...,\Ncorners}{
			\node[d] (poly\Ncorners-\x) at (poly\Ncorners.corner \x){};
		}	
		\foreach\X in {1,...,\Ncorners}{
			\ifnum\X=2
			\foreach\Y in {1,3,4,5}{
				\draw (poly\Ncorners-\X) -- (poly\Ncorners-\Y);}
			\fi			
			\ifnum\X=5
			\foreach\Y in {1,3,4}{
				\draw (poly\Ncorners-\X) -- (poly\Ncorners-\Y);}
			\fi
			\ifnum\X=3
			\foreach\Y in {4}{
				\draw (poly\Ncorners-\X) -- (poly\Ncorners-\Y);}
			\fi
		}	
		\end{tikzpicture}
	}
	\hfil
\scalebox{0.5}{
	\begin{tikzpicture}
	\pgfmathtruncatemacro{\Ncorners}{5}
	\node[ regular polygon,regular polygon sides=\Ncorners,minimum size=3cm] 
	(poly\Ncorners) {};
	\foreach\x in {1,...,\Ncorners}{
		\node[d] (poly\Ncorners-\x) at (poly\Ncorners.corner \x){};
	}	
	\foreach\X in {1,...,\Ncorners}{
		\ifnum\X=2
		\foreach\Y in {1,3,4,5}{
			\draw (poly\Ncorners-\X) -- (poly\Ncorners-\Y);}
		\fi				
		\ifnum\X=5
		\foreach\Y in {1,3,4}{
			\draw (poly\Ncorners-\X) -- (poly\Ncorners-\Y);}
		\fi
		\ifnum\X=1
		\foreach\Y in {3,4}{
			\draw (poly\Ncorners-\X) -- (poly\Ncorners-\Y);}
		\fi	
	}	
	\end{tikzpicture}
}	
\begin{tikzpicture}	
\node at (-4.6,0) {$H_1$};
\node at (0,0) {$H_2$};
\node at (4.6,0) {$H_3$};
\node at (0,-.8){Fig. 1: The graphs with five vertices of chromatic number four.};
\end{tikzpicture}
\end{center}
\begin{thm}\label{t1}
Let $H\in\{H_1,H_2,H_3\}$. For any integer $k\ge1$, we have
\[GR_k(H)=\begin{cases}
(R_2(H)-1)\cdot17^{(k-2)/2}+1, & \textup{if $k$ is even,}\\
4\cdot17^{(k-1)/2}+1, & \textup{if $k$ is odd,}
\end{cases}\]
where $R_2(H_1)=R_2(H_2)=18$ and $R_2(H_3)=22$.
\end{thm}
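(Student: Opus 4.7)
The plan is to establish matching lower and upper bounds, with the latter by induction on $k$.

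\textbf{Lower bound.} I exhibit a Gallai $k$-coloring of $K_{f(k)-1}$, where $f(k)$ denotes the claimed value, that avoids a monochromatic copy of $H$, via an iterated blow-up. Two base constructions suffice: for $k=1$, a monochromatically colored $K_4$ has no $H$ since $|V(H)|=5$; for $k=2$, take any 2-coloring of $K_{R_2(H)-1}$ with no monochromatic $H$, which exists by the definition of $R_2(H)$. To move from $k$ to $k+2$ colors, take the Paley 2-coloring of $K_{17}$, which is $K_4$-free in each color, and blow up each vertex into a disjoint copy of the $k$-color construction, using the two new colors on the cross edges between copies. Because $K_4 \subseteq H$, no monochromatic copy of $H$ in a new color can appear, so the resulting Gallai $(k+2)$-coloring on $17(f(k)-1)$ vertices remains $H$-free and matches the recurrence $f(k+2)-1 = 17\bigl(f(k)-1\bigr)$ dictated by the formula.

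\textbf{Upper bound.} The bases $k=1$ (any monochromatic $K_5$ contains $H$) and $k=2$ (direct from $R_2(H)$) are immediate. For the inductive step from $k-2$ to $k$, with $k \ge 3$, let $(K_N,c)$ be a Gallai $k$-coloring with $N=f(k)$, and apply Theorem~\ref{Gallai} to obtain a partition $V_1,\ldots,V_\ell$ whose reduced graph $\mathcal{G}\cong K_\ell$ uses only two colors, say red and blue. If $\mathcal{G}$ contains a monochromatic $H$ we are done, so henceforth assume $\ell \le R_2(H)-1$. The key observation is that $K_4 \subseteq H \subseteq K_5-e$ for each of $H_1,H_2,H_3$: if $\mathcal{G}$ contains, say, a red $K_4$ on $V_{i_1},\ldots,V_{i_4}$, then any single part of size $\ge 2$ among these yields a red $K_5-e$ and hence a red copy of $H$. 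If $\ell \ge 18$, which can occur only for $H_3$ since $R_2(H_1)=R_2(H_2)=18$, then $R_2(K_4)=18$ forces a monochromatic $K_4$ in $\mathcal{G}$, and combining this with the size constraint on $N$ and the ``$|V_{i_j}|\ge 2$'' argument forces a monochromatic $H_3$. If instead $\ell \le 17$, the largest part satisfies $|V_1|\ge \lceil N/17\rceil \ge f(k-2)$; then if red and blue are absent from edges inside $V_1$, the part $V_1$ carries a Gallai $(k-2)$-coloring and induction yields a monochromatic $H$. Otherwise, an internal red or blue edge in $V_1$ combined with the red or blue edges from $V_1$ to its neighbors in $\mathcal{G}$ must be coaxed into producing a monochromatic $K_5-e$.

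\textbf{Main obstacle.} The hardest point is ruling out $H$-free configurations in this last sub-case, where red and blue both leak inside the large part $V_1$: one must track how rainbow-triangle-freeness restricts which of the remaining $k-2$ colors may appear on edges between $V_1$ and each other part, and then combine several such restrictions to either exhibit a monochromatic $K_5-e$ directly or pass to a sub-partition on which the induction hypothesis applies. Executing this structural analysis uniformly across $H_1, H_2, H_3$, while preserving the factor $17$ (and not $R_2(H_3)-1=21$) in the recurrence for $H_3$, is the technical core of the proof, and it is also the step where the small differences between the three target graphs must be addressed separately.
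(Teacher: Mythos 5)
Your lower bound is essentially the paper's construction (iterated blow-up through a $K_4$-free two-coloring of $K_{17}$, with the $K_{R_2(H)-1}$ and $K_4$ base cases), and your observations that $K_4\subseteq H\subseteq K_5-e$ for all three targets and that $R_2(K_4)=18$ controls the case $\ell\ge 18$ are exactly the right levers. But the upper bound, as proposed, has a genuine gap, and it is the one you yourself flag as the ``main obstacle'': you never resolve the case where red or blue edges occur inside the large part $V_1$, and a straight induction on $GR_k(H)$ from $k-2$ to $k$ cannot resolve it. The reason is that the sub-configurations forced by the structural analysis are not of the form ``a Gallai $(k-2)$-coloring with no monochromatic $H$ in any color.'' For instance, if $V_1$ contains a red edge, then to avoid a red $K_5-e$ the red graph induced on the union of the parts red-adjacent to $V_1$ can contain no triangle (or, for $H_3$, no $P_3$) --- but it may still contain red edges, so you cannot discard the color red and appeal to the $(k-2)$-color hypothesis; what you need is a bound on the order of a Gallai coloring in which one color is forbidden from containing a $K_3$ (or $P_3$) while the others are forbidden from containing $H$. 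This is precisely why the paper does not prove Theorem~\ref{t1} directly: it strengthens the statement to the mixed quantities $GR_k((k-r)K_3,rH)$ and $GR_k((k-s-r)P_3,sK_3,rH_3)$ (Theorems~\ref{t1.3} and~\ref{t1.4}) and runs the induction on $k+r$, resp.\ $k+2r$, so that ``downgrading'' a color from $H$ to $K_3$ or $P_3$ stays inside the induction hypothesis. Without that strengthening (or an equivalent device), your final sub-case cannot be ``coaxed'' into a monochromatic $K_5-e$; the paper needs the full apparatus of the pendant-vertex set $T$, the adjacency conditions 1--4, and Claims~\ref{c1}--\ref{c7} to close it. Your sketch of the $\ell\ge 18$ case for $H_3$ is also incomplete for the same structural reason: a monochromatic $K_4$ in the reduced graph yields $H_3$ only if one of its four parts is non-singleton, and ruling out the all-singleton alternative again requires the refined counting against the mixed induction hypothesis rather than against $f(k-2)$ alone.
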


In order to prove Theorem \ref{t1}, we actually prove the following two theorems. Note that Theorem \ref{t1} follows  from Theorems \ref{t1.3} and \ref{t1.4} by setting $r=k$ and $s=0$.
\begin{thm}\label{t1.3}
Let $H\in\{H_1,H_2\}$. For any integers $k\ge1$ and $r$ with $0\le r\le k$, 
\[GR_k((k-r)K_3,rH)=~~~~~~~~~~~~~~~~~~~~~~~~~~~~~~~~~~~~~~~~~~~~~~~~~~~~~~\]
\[\begin{cases}
5^{(k-r)/2}\cdot17^{r/2}+1, & \textup{if $(k-r)$ and $r$ are both even,~~~$(a_1)$}\\
2\cdot5^{(k-r-1)/2}\cdot 17^{r/2}+1, & \textup{if $(k-r)$ is odd and $r$ is even,~~~$(a_2)$}\\
8\cdot5^{(k-r-1)/2}\cdot17^{(r-1)/2}+1, & \textup{if $(k-r)$ and $r$ are both odd,~~~~$(a_3)$}\\
4\cdot5^{(k-r)/2}\cdot17^{(r-1)/2}+1, & \textup{if $(k-r)$ is even and $r$ is odd.~~~$(a_4)$}
\end{cases}\]	
\end{thm}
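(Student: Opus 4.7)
\emph{Plan.} The proof splits into matching lower-bound constructions and an upper-bound induction on $k$.

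\emph{Lower bound.} For each of the four parity cases $(a_1)$--$(a_4)$, I would build a Gallai $k$-coloring of the complete graph on $N-1$ vertices (with $N$ denoting the formula value) containing no monochromatic $K_3$ in any of the first $k-r$ colors and no monochromatic copy of $H$ in any of the last $r$ colors. The construction is iterative with three building blocks: the bichromatic $C_5$+$C_5$ decomposition of $K_5$ for each pair of triangle-colors (contributing factor $5 = R_2(K_3)-1$); the bichromatic extremal coloring of $K_{17}$ witnessing $R_2(H)=18$ for each pair of $H$-colors (contributing factor $17$); and, when both $k-r$ and $r$ are odd, the bichromatic extremal coloring of $K_8$ witnessing $R(K_3,H)=9$ (contributing factor $8$). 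Starting from a seed of order $1$, $2$, $4$, or $8$ that matches the parity case, I would substitute each vertex by the appropriate building block, consuming a fresh pair of colors at every step until all $k$ colors are spent. Each substitution preserves the Gallai property, and a routine check shows no monochromatic forbidden subgraph is created.

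\emph{Upper bound.} For the upper bound I would induct on $k$. The base cases $k \le 2$ follow from $R_2(K_3)=6$, $R_2(H)=18$, and $R(K_3,H)=9$; the last value, for $H \in \{H_1,H_2\}$, can be read off from $R(K_3,K_4)=9$ together with a short argument upgrading a blue $K_4$ in $K_9$ to a blue $H$ using one of the remaining vertices together with the fact that $|H|=5$. For the inductive step, take a Gallai $k$-coloring $(G,c)$ of $K_N$ with $N$ the claimed value, apply Theorem \ref{Gallai} to obtain a Gallai partition $V_1,\ldots,V_\ell$ whose reduced graph $\mathcal{G}$ uses only two colors $\alpha,\beta\in[k]$, and split according to whether $\{\alpha,\beta\}$ consists of two triangle-colors, two $H$-colors, or one of each. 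If $\ell$ meets or exceeds the corresponding $2$-color Ramsey number ($6$, $18$, or $9$), then $\mathcal{G}$ already produces a monochromatic copy of $K_3$ or $H$ that lifts to $G$; otherwise $\ell \le 5$, $17$, or $8$, so some part $V_i$ has order at least the $(k-2)$-color inductive bound, which is the hook for applying the inductive hypothesis.

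\emph{Main obstacle.} The central technical difficulty is that the restriction of $c$ to $V_i$ is still a Gallai $k$-coloring, so one cannot legitimately invoke the inductive hypothesis with two fewer colors. My plan to get around this is to use the other parts $V_j$ ($j\neq i$) as a sewing gadget: since every vertex of $V_i$ is $mc$-adjacent to each $V_j$ in color $\alpha$ or $\beta$, a smaller monochromatic substructure found inside $V_i$ (for instance a $K_3$ in color $\alpha$, or a $K_4$ in one of the $H$-colors) can be completed to a full target $K_3$ or $H$ by attaching one or two borrowed vertices from a suitably chosen $V_j$; this uses crucially that $H_1, H_2$ are $K_4$ with one pendant-type vertex. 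Calibrating this extension argument across the three color-pair branches and the four parity cases $(a_1)$--$(a_4)$, and handling the degenerate subcases where $\mathcal{G}$ is essentially monochromatic or $\ell$ is very small (so that a single enlarged part must be iterated), will require a detailed case analysis and is where the bulk of the technical bookkeeping lies.
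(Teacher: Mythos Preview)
Your lower-bound construction matches the paper's and is fine. The gap is in your inductive upper bound, specifically in the step where you combine the sewing argument with pigeonhole.

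Suppose both reduced-graph colors $\alpha,\beta$ are $H$-colors and $\ell\le 17$. Your sewing gadget shows at best that inside the large part $V_i$ there is no monochromatic $K_3$ in color $\alpha$ or $\beta$; it does \emph{not} eliminate those colors. So the restriction to $V_i$ is still a Gallai $k$-coloring, now with $k-r+2$ triangle-forbidden colors and $r-2$ $H$-forbidden colors. The correct inductive bound is therefore $w(k,r-2)$, not $w(k-2,r-2)$. But $w(k,r-2)=\tfrac{5}{17}w(k,r)$, so summing over $\ell\le 17$ parts gives only $|G|\le 5\,w(k,r)$, which is far from a contradiction. The same overshoot occurs in the mixed case ($\ell\le 8$, one $H$-color converted to a $K_3$-color, bound $w(k,r-1)\ge\tfrac{8}{17}w(k,r)$). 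Your plan never gets access to the genuine $(k-2)$-color bound $w(k-2,r-2)$ unless both $\alpha$ and $\beta$ are entirely absent from $V_i$, and nothing in the sewing argument forces that.

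What the paper does instead is abandon the per-part pigeonhole and bound $|V_1|$, $|R|$ (the red-neighborhood of $V_1$), and $|B|$ separately, using asymmetric restrictions: e.g.\ if $V_1$ contains a red edge and $|R|\ge 2$, then $G[R]$ has no red edge at all (condition 1), which is much stronger than merely ``no red $K_3$.'' This lets different pieces be bounded by different functions $w(k',r')$, and a table of carefully tuned inequalities (Table~1 and list $(a)$) shows the three pieces always sum to at most $w(k,r)$. A preliminary step strips off a set $T$ of vertices each seeing the rest in a single color, and the induction is on $k+r$ rather than $k$, so that bounds like $w(k,r-1)$ and $w(k,r-2)$ with the same $k$ are available. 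Your outline would need all of this additional structure to close.
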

\begin{thm}\label{t1.4}
For any integers $k\ge1$, $s$ and $r$ with $0\le s\le k$ and $0\le r\le k$,
\[GR_k((k-s-r)P_3,sK_3,rH_3)=~~~~~~~~~~~~~~~~~~~~~~~~~~~~~~~~~~~~~~~~~~~~~~~~~~~~~\]
\[\begin{cases}
5^{s/2}\cdot\lfloor21\cdot17^{(r-2)/2}\rfloor+1, & \textup{if $s,r$ are both even and $s+r=k$, $(b_1)$}\\
2\cdot5^{s/2}\cdot17^{r/2}+1, & \textup{if $s,r$ are both even and $s+r<k$, $(b_2)$}\\
5^{(s-1)/2}\cdot\lfloor42\cdot17^{(r-2)/2}\rfloor+1, & \textup{if $s$ is odd, $r$ is even and $s+r=k$, $(b_3)$}\\
4\cdot5^{(s-1)/2}\cdot17^{r/2}+1, & \textup{if $s$ is odd, $r$ is even and $s+r<k$, $(b_4)$}\\ 
10\cdot5^{(s-1)/2}\cdot17^{(r-1)/2}+1, & \textup{if $s,r$ are both odd and $s+r=k$, ~$(b_5)$}\\
16\cdot5^{(s-1)/2}\cdot17^{(r-1)/2}+1, & \textup{if $s,r$ are both odd and $s+r<k$, ~$(b_6)$}\\
4\cdot5^{s/2}\cdot17^{(r-1)/2}+1, & \textup{if $s$ is even, $r$ is odd and $s+r=k$, $(b_7)$}\\
\lfloor32\cdot5^{(s-2)/2}\rfloor\cdot17^{(r-1)/2}+1, & \textup{if $s$ is even, $r$ is odd and $s+r<k$. $(b_8)$}
\end{cases}\]
\end{thm}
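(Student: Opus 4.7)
The plan is to prove each of the eight cases of Theorem~\ref{t1.4} by establishing matching lower and upper bounds. The lower bounds are witnessed by explicit Gallai $k$-colorings of $K_{N-1}$ (with $N$ the claimed value) that contain neither a rainbow triangle nor any of the prescribed monochromatic targets. These colorings are built by iterated substitution: begin with an extremal 2-color Ramsey construction for the relevant base case (for instance a Gallai 2-coloring of $K_{21}$ containing no monochromatic $H_3$, certifying $R_2(H_3)=22$), and recursively blow up every vertex by the best construction already obtained for fewer colors. The factor $5=R(K_3,K_3)-1$ governs the growth when two fresh $K_3$ colors are introduced, the factor $17$ governs the growth when an $H_3$ color is paired with a non-$H_3$ color, and the factor $21$ governs the growth when two fresh $H_3$ colors are introduced. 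The eight parity regimes simply record which small base case anchors the recursion, which also accounts for the floor expressions in cases $(b_1)$, $(b_3)$ and $(b_8)$.

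For the upper bound I would proceed by induction on $k$. The base cases $k \le 2$ rely on the values $R(P_3,P_3)=3$, $R(K_3,K_3)=6$ and $R_2(H_3)=22$, together with the mixed two-color Ramsey values $R(P_3,K_3)$, $R(P_3,H_3)$ and $R(K_3,H_3)$; these numbers account for the constants $2,4,8,10,16,21,32,42$ appearing in the statement. For the inductive step, suppose that $(G,c)$ is a Gallai $k$-coloring of $K_N$ with $N$ equal to the value claimed in the relevant case, and assume for contradiction that $G$ contains no monochromatic copy of any prescribed target. Applying Theorem~\ref{Gallai} yields a Gallai partition $V_1,\dots,V_\ell$ such that the edges between the parts use at most two colors, say $\alpha$ and $\beta$. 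Let $\mathcal{G}$ be the associated reduced graph on $\ell$ vertices.

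The argument now splits according to the identity of the pair $(\alpha,\beta)$ and whether $\ell$ is large enough relative to the appropriate two-color Ramsey number to force a monochromatic target inside $\mathcal{G}$. When a target is forced in $\mathcal{G}$, it either lifts directly to $G$ in the same color, or one combines it with enough vertices from a suitably large part $V_i$ to upgrade a monochromatic $P_3$ or $K_3$ in $\mathcal{G}$ into a monochromatic $H_3$ in $G$, exploiting the mc-adjacency between $V_i$ and the rest. Otherwise $\ell$ is bounded above, in which case I would single out a largest part $V_1$ and apply the induction hypothesis to $V_1$ on the remaining $k-1$ or $k-2$ colors; the bound the hypothesis imposes on $|V_1|$, together with the vertices contributed by the other parts, will contradict $|G| \ge N$.

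The main obstacle is the volume and delicacy of the case analysis. Within the inductive step the regime $s+r=k$ behaves qualitatively differently from $s+r<k$, because a spare $P_3$ color can be absorbed into small reduced graphs in a way that $K_3$ and $H_3$ colors cannot; this is exactly why cases $(b_1)$ and $(b_2)$, or $(b_7)$ and $(b_8)$, have different formulas, and matching the floor expressions in $(b_1)$, $(b_3)$ and $(b_8)$ to the correct extremal construction requires particular care. Several sub-cases should be shortened by appealing directly to Theorem~\ref{t1.3}: whenever no $H_3$ color is present on the crucial parts, the analysis collapses onto a case already handled there with $H_1$ or $H_2$ in a passive role, and this is the main leverage that makes the seemingly enormous case split manageable.
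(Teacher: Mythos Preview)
Your outline has the right shape but contains several concrete errors that would prevent the argument from closing.

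\textbf{Lower bound: the roles of $17$ and $21$ are reversed.} In the recursive blow-up step for two $H_3$ colors, the reduced graph must avoid a monochromatic $K_4$, not merely a monochromatic $H_3$: once each vertex is blown up to size $\ge 2$, a monochromatic $K_4$ in the reduced graph already contains a monochromatic $H_3$ in $G$. Hence the growth factor for two $H_3$ colors is $R_2(K_4)-1=17$, not $21$. The constant $21=R_2(H_3)-1$ appears only as a base case (when the parts are single vertices, i.e.\ $s+r=k$), which is precisely why $(b_1)$ and $(b_3)$ carry the floor of $21\cdot17^{(r-2)/2}$ rather than $17^{r/2}$. Your description would produce the wrong constructions in cases $(b_1),(b_3),(b_7)$.

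\textbf{Upper bound: induction on $k$ alone is insufficient.} In the Gallai-partition analysis one repeatedly needs bounds of the form $|R|\le f(k,s+1,r-1)$ or $|V_1\cup T|\le f(k,s,r-1)$, where $k$ is unchanged but an $H_3$ color has been demoted to a $K_3$ or $P_3$ color inside a subgraph. These calls are not covered by an induction on $k$. The paper inducts on $k+2r$ (so that decreasing $r$ while fixing $k$ still lowers the induction parameter), and without this the key inequalities in the case ``both reduced colors are $H_3$ colors'' cannot be invoked.

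\textbf{A missing structural ingredient.} Before taking the Gallai partition, the paper strips off a maximal sequence $T=\{t_1,\dots,t_m\}$ of vertices each monochromatically adjacent to the remainder; this forces $|T|\le r$ and guarantees $|V_1|\ge 2$ in the partition of $G\setminus T$. Your sketch applies the partition to $G$ directly and then ``singles out a largest part'', but without $T$ you cannot rule out the degenerate case where many parts are singletons joined in colors assigned to $H_3$, and the counting collapses.

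\textbf{Reduction to Theorem~\ref{t1.3}.} This does not work as stated: Theorem~\ref{t1.3} concerns $H_1,H_2$ and gives no control over $H_3$; in fact the $H_3$ analysis is strictly longer because condition~1 (two disjoint edges suffice) fails for $H_3$, forcing the additional ``no red $P_3$'' sub-cases that your outline does not anticipate.
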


For ease of notations, let $GR_k((k-r)K_3,rH)=w(k,r)+1$ and $GR_k((k-s-r)P_3,sK_3,rH_3)=f(k,s,r)+1$. In this paper, we first show that $GR_k((k-r)K_3,rH)\ge w(k,r)+1$ and $GR_k((k-s-r)P_3,sK_3,rH_3)\ge f(k,s,r)+1$ by construction in Section 2 and then we prove that such lower bounds are actually the upper bounds in Section 3. 
\section{Preliminaries}
In this section, we first list some known results that shall be applied in the proof of Theorems \ref{t1.3} and \ref{t1.4}.
\begin{thm}[\cite{MC}]\label{MC}
$R(K_3,H_1)=R(K_3,H_2)=9$ and $R(K_3,H_3)=11$.
\end{thm}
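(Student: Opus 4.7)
The plan is to establish matching lower and upper bounds for each of the three Ramsey numbers. For the lower bounds, note that both $H_1$ and $H_2$ contain $K_4$ as a subgraph, so $R(K_3,H_i)\ge R(K_3,K_4)=9$ for $i\in\{1,2\}$, witnessed by the classical extremal $2$-coloring of $K_8$. For $H_3=K_5-e$, the bound $R(K_3,H_3)\ge 11$ is established by exhibiting a triangle-free graph on $10$ vertices whose complement contains no $K_5-e$, equivalently, a triangle-free graph in which every $5$-vertex subset spans at least two edges; such a construction is known, for instance via a suitable circulant on $\mathbb{Z}_{10}$.

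For the upper bound $R(K_3,H_i)\le 9$ with $i\in\{1,2\}$, I would consider any $2$-coloring of $K_9$ with no red triangle. By $R(K_3,K_4)=9$ there is a blue $K_4$ on a set $A=\{a,b,c,d\}$; write $B=V(K_9)\setminus A$, so $|B|=5$. For $H_1$: if some $v\in B$ is blue-adjacent to a vertex of $A$ then $A\cup\{v\}$ already spans $H_1$; otherwise every $B$-to-$A$ edge is red, and any red edge inside $B$ would complete a red triangle with a vertex of $A$, so $B$ induces a blue $K_5\supseteq H_1$. For $H_2$: if some $v\in B$ has at least two blue neighbors in $A$ then $A\cup\{v\}$ contains $H_2$; otherwise each $v\in B$ has at least three red neighbors in $A$, and averaging yields a vertex $a\in A$ with at least four red neighbors in $B$, which must be pairwise blue (otherwise a red triangle through $a$), producing a second blue $K_4$ inside $B$. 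If this exhausts $B$ we already have a blue $K_5\supseteq H_2$; the one possible remaining vertex of $B$ is then blue to $a$ but red to $b,c,d$, and a red edge from it to the new $K_4$ would force its other endpoint to be blue-adjacent to all of $\{b,c,d\}$, contradicting the $\le 1$ blue-neighbor assumption; hence again blue $K_5\supseteq H_2$.

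For the upper bound $R(K_3,H_3)\le 11$, a similar but more delicate case analysis is required on $K_{11}$. Starting from a blue $K_4$ on $A$ we now have $|B|=7$, and I would split according to the maximum size of a vertex's blue neighborhood in $A$: if some $v\in B$ has three or more blue neighbors in $A$, then $A\cup\{v\}$ spans a blue $K_5-e$ directly, while otherwise each $v\in B$ has at least two red neighbors in $A$ and an averaging argument yields a vertex $a\in A$ with a large red neighborhood in $B$ which is forced to be a blue clique, combining with appropriate vertices of $A$ or $B\setminus B_a$ to produce $K_5-e$. The main obstacle is the tight boundary case where every $v\in B$ has exactly two blue neighbors in $A$; here one invokes $R(K_3,K_4)=9$ on a carefully chosen nine-vertex sub-configuration to extract a second blue $K_4$ overlapping the first in a manner that exposes the required $K_5-e$.
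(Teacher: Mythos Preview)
The paper does not prove this theorem at all: it is quoted from Clancy's 1977 paper \cite{MC} as a known input, so there is no ``paper's own proof'' to compare against. Your write-up is therefore an attempt to re-derive a cited result rather than to reproduce an argument from the paper.

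On the substance of your attempt: the arguments for $R(K_3,H_1)\le 9$ and $R(K_3,H_2)\le 9$ are correct and clean (in the $H_2$ case your endgame is a little compressed, but the forcing you describe does go through: the lone vertex $w\in B\setminus B_a$ is blue to $a$, red to $b,c,d$, and a red edge $wp$ with $p\in B_a$ would make $p$ blue to each of $b,c,d$, contradicting the ``at most one blue neighbor in $A$'' hypothesis; hence $w$ is blue to all of $B_a$ and $B$ is a blue $K_5$). The lower bounds for $H_1,H_2$ via $R(K_3,K_4)=9$ are fine.

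For $H_3=K_5-e$, however, both directions are left as sketches rather than proofs. For the lower bound you assert the existence of a triangle-free graph on $10$ vertices in which every $5$-set spans at least two edges, citing ``a suitable circulant on $\mathbb{Z}_{10}$'' without naming it or verifying the property; the natural witness here is actually the Petersen graph (which is not a circulant), and the verification that no $5$-set spans $\le 1$ edge requires a short argument. For the upper bound your outline stops precisely at the hard case (each $v\in B$ with exactly two blue neighbors in $A$), and the appeal to ``$R(K_3,K_4)=9$ on a carefully chosen nine-vertex sub-configuration'' is not an argument: you have not said which nine vertices, nor why the resulting second blue $K_4$ must overlap the first in a way that produces $K_5-e$. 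As written, the $H_3$ part is a plan, not a proof.
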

\begin{thm}[\cite{GG}]\label{GG}
$R_2(K_3)=6$ and $R(K_3,K_4)=9$.
\end{thm}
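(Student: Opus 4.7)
The plan is to establish the two classical Ramsey numbers by pairing a construction (lower bound) with a pigeonhole-style argument (upper bound), handling $R_2(K_3)=6$ first and then bootstrapping it inside the proof of $R(K_3,K_4)=9$.

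For $R_2(K_3)=6$, the lower bound is witnessed by the unique (up to isomorphism) $2$-coloring of $K_5$ in which the red graph is a $5$-cycle and the blue graph is the complementary $5$-cycle; neither $C_5$ contains a triangle, so $R_2(K_3)\ge 6$. For the upper bound, I would take an arbitrary $2$-coloring of $K_6$, fix a vertex $v$, and observe that among its $5$ incident edges at least $\lceil 5/2\rceil=3$ share a color, say red, at neighbors $u_1,u_2,u_3$. If some $u_iu_j$ is red, then $vu_iu_j$ is a red $K_3$; otherwise all three edges inside $\{u_1,u_2,u_3\}$ are blue and give a blue $K_3$.

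For $R(K_3,K_4)=9$, the lower bound requires a $2$-coloring of $K_8$ with no red $K_3$ and no blue $K_4$. I would exhibit the standard extremal example in which the red graph is the triangle-free graph $C_8(1,4)$ (the $8$-cycle with its four long diagonals, i.e.\ the $3$-regular triangle-free graph $K_{4,4}$ minus a perfect matching, equivalently the cube $Q_3$), and verify by inspection that it is triangle-free and that its complement has independence number $3$, hence no $K_4$. For the upper bound, given any $2$-coloring of $K_9$, I would argue on the red degree of a single vertex $v$. If some vertex has red degree at least $4$, then among its red neighbors either a red edge gives a red $K_3$ with $v$, or those four vertices induce a blue $K_4$. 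Otherwise every vertex has red degree at most $3$, so the red degree sum is at most $27$; since it must be even it is at most $26$, producing some vertex $v$ with blue degree at least $6$, to which I apply $R_2(K_3)=6$ on its blue neighborhood: a red triangle there finishes the job, and a blue triangle together with $v$ yields a blue $K_4$.

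The routine ingredient is the pigeonhole on vertex-degrees and the invocation of $R_2(K_3)=6$; the only genuine obstacle is producing and certifying the extremal $K_8$-coloring for $R(K_3,K_4)\ge 9$, which is a small but delicate verification that the specific triangle-free graph on $8$ vertices has independence number exactly $3$. Because this is a classical Greenwood--Gleason-type result, I expect to cite or display the construction rather than rederive its optimality from scratch.
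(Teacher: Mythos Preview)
The paper does not prove this theorem at all; it is simply quoted from Greenwood--Gleason with a citation, so there is no in-paper argument to compare against. Your upper-bound arguments for both numbers are the standard ones and are correct, as is the $C_5$ lower bound for $R_2(K_3)=6$.

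There is, however, a genuine error in your lower-bound construction for $R(K_3,K_4)\ge 9$. You write that the red graph is $C_8(1,4)$, ``the $8$-cycle with its four long diagonals, i.e.\ the $3$-regular triangle-free graph $K_{4,4}$ minus a perfect matching, equivalently the cube $Q_3$''. These are \emph{not} the same graph. The circulant $C_8(1,4)$ (the M\"obius ladder, or Wagner graph $V_8$) is non-bipartite: for instance $0\text{--}4\text{--}5\text{--}6\text{--}7\text{--}0$ is a $5$-cycle. By contrast $K_{4,4}$ minus a perfect matching is bipartite and is indeed $Q_3$. The distinction matters, because $Q_3$ has independence number $4$ (either bipartition class), so taking $Q_3$ as the red graph would give a blue $K_4$ and would \emph{not} establish the lower bound. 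The graph $C_8(1,4)$ does work: it is triangle-free (no two elements of the connection set $\{1,4,7\}$ sum to a third modulo $8$), and its independence number is $3$ (any $4$-element independent set in the underlying $C_8$ must be one of the two alternating sets $\{0,2,4,6\}$ or $\{1,3,5,7\}$, each of which contains a long diagonal). So keep $C_8(1,4)$, drop the incorrect ``i.e.'' identifications, and your proof goes through.
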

\begin{thm}[\cite{R2}]\label{R2}
$R_2(H_1)=R_2(H_2)=18$ and $R_2(H_3)=22$.
\end{thm}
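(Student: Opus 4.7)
The plan is to reduce this to two classical $2$-color Ramsey calculations via the subgraph chain $K_4\subset H_1\subset H_2\subset H_3=K_5-e$. By monotonicity of $R_2(\cdot)$ together with the well-known values $R_2(K_4)=18$ (Greenwood--Gleason) and $R(K_5-e,K_5-e)=22$, the chain squeezes $R_2(H_3)=22$ for free and gives $18\le R_2(H_1)\le R_2(H_2)\le 22$. Thus the only remaining work is a matching lower bound at $18$ and the upper bound $R_2(H_2)\le 18$, which then forces $R_2(H_1)=R_2(H_2)=18$.

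For the lower bounds $R_2(H_1),R_2(H_2)\ge 18$, any $K_4$-free $2$-coloring of $K_{17}$ works; the standard witness is the Paley-type coloring on $\mathrm{GF}(17)$, where edge $uv$ is colored by whether $u-v$ is a quadratic residue. Since $H_1$ and $H_2$ both contain $K_4$, no monochromatic copy of $H_i$ appears. For $R_2(H_3)\ge 22$, one uses the classical $2$-coloring of $K_{21}$ with no monochromatic $K_5-e$ that underlies the lower bound for $R(K_5-e,K_5-e)$.

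For $R_2(H_2)\le 18$, fix a $2$-edge-coloring of $K_{18}$ and extract a red monochromatic $K_4$ on $V=\{v_1,\ldots,v_4\}$ using $R_2(K_4)=18$; let $U$ be the remaining $14$ vertices. Since $H_2$ is $K_4$ together with a fifth vertex joined to exactly two of the clique's vertices, if some $u\in U$ has two red edges into $V$ we obtain red $H_2$ immediately. Otherwise every $u\in U$ has at least three blue edges into $V$, and averaging forces some $v_i\in V$ to be blue-adjacent to a set $W\subseteq U$ with $|W|\ge 11\ge R(K_3,K_4)=9$. Applying Ramsey inside $W$ supplies either a blue $K_3$ (combine with $v_i$ for a blue $K_4$, then extend to blue $H_2$ via a further vertex of $W$ with a second blue edge into the triangle, produced by one more pigeonhole on the blue edges from $W$ to $V$), or a red $K_4$ inside $W$ (extend to red $H_2$ via red edges from $V$ to this red $K_4$, which by pigeonhole on the residual red edges must concentrate enough to produce a vertex with two red edges into it).

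The main obstacle is the sub-case bookkeeping at the extension step: handling configurations where the fifth vertex with two same-color edges seemingly fails to exist forces a second round of averaging on the edges between the target clique and its complement in $U$. This is the standard small-graph Ramsey technique. The $H_3$ upper bound then follows immediately from the classical $R(K_5-e,K_5-e)=22$, completing all three equalities.
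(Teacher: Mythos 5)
First, note that the paper does not prove this statement at all: Theorem~\ref{R2} is quoted verbatim from Hendry's paper [10] on Ramsey numbers for five-vertex graphs, so there is no internal proof to compare yours against. Your reduction framework is sound as far as it goes: the chain $K_4\subset H_1\subset H_2\subset H_3=K_5-e$ together with $R_2(K_4)=18$ and $R(K_5-e,K_5-e)=22$ does give $R_2(H_3)=22$ and $18\le R_2(H_1)\le R_2(H_2)$, the Paley coloring of $K_{17}$ handles the lower bounds, and everything correctly collapses to the single claim $R_2(H_2)\le 18$. (Be aware, though, that citing $R(K_5-e,K_5-e)=22$ is citing one third of the theorem rather than proving it.)

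The genuine gap is in the extension step for $R_2(H_2)\le 18$, exactly where you wave at ``standard small-graph Ramsey technique.'' Having found a red $K_4$ on $V$ and a vertex $v_i$ with $|W|\ge 11$ blue neighbours, your Case A mechanism is misstated: after extracting a blue $K_3$ on $T\subseteq W$ and forming the blue $K_4$ on $\{v_i\}\cup T$, the fifth vertex needs a blue edge from $W\setminus T$ into $T$, not into $V$, so ``pigeonhole on the blue edges from $W$ to $V$'' does not produce it; it is entirely consistent with your hypotheses that every vertex of $W\setminus T$ is completely red to $T$. In Case B the claim that red edges from $V$ ``must concentrate enough to produce a vertex with two red edges'' into the red $K_4$ found in $W$ is false as stated: each vertex of $W$ has at most one red edge to $V$ by assumption, and nothing forces any vertex to have two red edges into that $K_4$. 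In both cases the natural fallback (``every outside vertex has at least three edges of the opposite colour into the new $K_4$, so average again'') reproduces the situation you started from with the colours swapped, and without an additional idea (e.g., pigeonholing onto a \emph{pair} of vertices of the monochromatic $K_4$ to find many common blue neighbours, or a parity/counting argument that the recursion cannot continue) the argument loops rather than terminates. The skeleton is plausible and the result is true, but the hard part of the upper bound --- the part Hendry's paper actually carries out --- is precisely what is missing here.
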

\begin{thm}[\cite{SPR}]\label{SPR}
$R(P_3,K_3)=5$ and $R(P_3,H_3)=7$.
\end{thm}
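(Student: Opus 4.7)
The key structural observation is that a $2$-colouring contains no red $P_3$ exactly when the red edges form a matching, since any two edges sharing a vertex constitute a $P_3$. Hence, in any target $K_n$, a colouring without a red $P_3$ has blue subgraph $K_n-M$ for some matching $M$ with $|M|\le\lfloor n/2\rfloor$. I would prove each equality by first producing a sharp construction on $n-1$ vertices, then running a short analysis by $|M|$ on $n$ vertices. Reading $H_3$ off Fig.~1, I note that $H_3=K_5-e$, which will be used implicitly throughout.

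For $R(P_3,K_3)=5$, the lower bound is realised on $K_4$ by colouring a perfect matching red, leaving blue equal to the complementary $C_4$, which is triangle-free. For the upper bound I would pick any vertex $v$ of $K_5$: since red is a matching, $v$ has at least three blue neighbours $x,y,z$; if any one of $xy,xz,yz$ is blue we are done, and otherwise $xyz$ is a red triangle, impossible in a matching.

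For $R(P_3,H_3)=7$, the lower bound is realised on $K_6$ by colouring a perfect matching red, so blue becomes the cocktail-party graph $K_{2,2,2}$. The induced subgraph on any five of its vertices is $K_{2,2,1}$, which is $K_5$ minus two independent edges and hence has only $8<9$ edges, too few to contain $K_5-e$. For the upper bound in $K_7$ with red matching $M$ and $|M|\le 3$, I would exhibit five vertices whose induced blue graph is missing at most one edge of $M$. When $|M|\le 2$, the at-least-three unmatched vertices together with a suitable pair from the matching yield $0$ or $1$ non-edges; when $|M|=3$, the unique unmatched vertex $u$ together with both endpoints of one matching edge and one endpoint from each of the other two matching edges spans exactly one edge of $M$. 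In every case the induced blue graph on the chosen five vertices has at least $\binom{5}{2}-1=9$ edges and therefore contains $K_5-e$.

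The only delicate point is the pigeonhole in the $|M|=3$ subcase, where picking four vertices from three disjoint pairs forces at least one full pair to be picked but, as above, can be arranged to force no more than one. Beyond this, both claims reduce to the single structural fact that a $P_3$-free colour class is a matching.
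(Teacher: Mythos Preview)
Your proof is correct. The identification $H_3=K_5-e$ is accurate, the matching characterisation of $P_3$-free colour classes is the right structural tool, and each of the case analyses (including the $|M|=3$ selection argument) goes through cleanly.

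There is nothing to compare against, however: the paper does not prove this statement. Theorem~\ref{SPR} is listed in the Preliminaries section as a known result cited from Radziszowski's dynamic survey~\cite{SPR}, with no proof supplied. The paper simply uses $R(P_3,K_3)=5$ and $R(P_3,H_3)=7$ as black boxes in the constructions of Section~2 and in the counting arguments of Section~3 (for instance inside Claims~\ref{c2} and~\ref{c4}). Your argument therefore fills in a detail that the paper deliberately outsources to the literature.
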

\begin{thm}[\cite{4}]\label{4}
For any integer $k\ge1$, $GR_k(P_3)=3$.	
\end{thm}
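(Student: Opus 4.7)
The plan is to verify the two inequalities $GR_k(P_3)\ge 3$ and $GR_k(P_3)\le 3$ separately; each reduces to a short observation about a complete graph on at most three vertices, so no induction on $k$ and no structural machinery (such as the Gallai-partition Theorem \ref{Gallai}) is required.

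For the lower bound $GR_k(P_3)\ge 3$, I will exhibit a Gallai $k$-coloring of $K_2$ that contains no monochromatic $P_3$. Since $K_2$ consists of a single edge, it admits no copy of $P_3$ at all, regardless of the color assigned; moreover any such coloring is vacuously Gallai because $K_2$ contains no triangle. This immediately yields $GR_k(P_3)>2$, i.e., $GR_k(P_3)\ge 3$.

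For the upper bound $GR_k(P_3)\le 3$, I will take an arbitrary Gallai $k$-coloring $c$ of $K_3$ and produce a monochromatic $P_3$ directly. The key observation is that $K_3$ has exactly three edges; if those three edges all received pairwise distinct colors under $c$, the triangle itself would be rainbow, contradicting the definition of a Gallai coloring. Hence at least two of the three edges share a color, and since any two edges of $K_3$ are incident at a common vertex, these two edges form a monochromatic $P_3$ in some color $i\in[k]$.

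There is essentially no obstacle in this argument, as the result is a one-line consequence of the definitions together with the fact that $|E(K_3)|=3$. The only thing worth underscoring in the write-up is that the argument is uniform in $k$, so no appeal to $R_k(P_3)$ or any Ramsey-type input is needed, which is why the bound $GR_k(P_3)=3$ is constant in $k$ (consistent with the general dichotomy recorded in \cite{exponential} for star graphs).
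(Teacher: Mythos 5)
Your proposal is correct and complete: the lower bound follows because $K_2$ has too few vertices to contain any $P_3$ while trivially admitting a Gallai coloring, and the upper bound follows because a triangle whose three edges are not rainbow must have two edges of a common color, which share a vertex and hence form a monochromatic $P_3$. Note that the paper does not prove this statement itself; Theorem~\ref{4} is imported from~\cite{4} as a known base case for the induction in Section~3, so there is no in-paper argument to compare against. Your two-line verification is the standard one, is uniform in $k$ as you observe, and correctly explains why the value is constant in $k$, consistent with the star dichotomy of~\cite{exponential}.
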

\begin{thm}[\cite{chgr,exponential}]\label{c} For any integer $k\ge1$,
	\[GR_k(K_3)=\begin{cases}
	5^{k/2}+1, & \textup{if $k$ is even,}\\
	2\cdot5^{(k-1)/2}+1, & \textup{if $k$ is odd.}
	\end{cases}\]
\end{thm}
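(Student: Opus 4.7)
The plan is to prove both inequalities by induction on $k$. Write $w(k) = GR_k(K_3) - 1$; the claim asserts $w(k) = 5 w(k-2)$ with base values $w(1) = 2$ and $w(2) = 5$. For the lower bound, build a Gallai $k$-coloring of $K_{w(k)}$ avoiding a monochromatic triangle inductively: start from the single edge of $K_2$ (for $k = 1$), and from the two-$C_5$ 2-coloring of $K_5$ (for $k = 2$, which exists by $R_2(K_3) = 6$). Given such a coloring on $K_{w(k-2)}$ using the old $k-2$ colors, blow up each vertex into a $K_5$ internally 2-colored by the two-$C_5$ scheme with two fresh colors, and retain the old coloring on the edges between blobs. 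No rainbow triangle is created, since the two fresh colors live only inside blobs and the old colors only across blobs, so any triangle sees at most two colors. Moreover, any monochromatic triangle is either entirely inside one blob (forbidden by the $K_5$ construction) or entirely inside the quotient graph (forbidden by induction), so $w(k) \geq 5 w(k-2)$.

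For the upper bound, argue by induction. The base cases $k = 1, 2$ are $R(K_3) = 3$ and $R_2(K_3) = 6$ from Theorem \ref{GG}. For $k \geq 3$, let $n = w(k) + 1$ and consider any Gallai $k$-coloring of $K_n$. Apply Theorem \ref{Gallai} to obtain a Gallai partition $V_1, \ldots, V_\ell$ whose reduced graph is 2-colored, say with colors red and blue. Since $R_2(K_3) = 6$, if $\ell \geq 6$ then the reduced graph already contains a monochromatic $K_3$, which lifts to one in $G$; so assume $\ell \in \{2,3,4,5\}$.

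The crucial observation is that if a part $V_i$ is joined to some other part by red edges and also contains an internal red edge, then together with any vertex of the red-neighboring part we obtain a red $K_3$; the analogous statement holds for blue. Hence if $V_i$ sees both red and blue among its inter-part edges, its internal coloring uses at most $k-2$ colors and so $|V_i| \leq w(k-2)$ by induction; if it sees only one color then $|V_i| \leq w(k-1)$. It remains to enumerate the $K_3$-free 2-colorings of $K_\ell$ for $\ell \leq 5$ and sum the per-part bounds: for $\ell = 5$ the unique such coloring has both color classes equal to $C_5$, every vertex sees both colors, and the total is at most $5 w(k-2) = w(k)$; for $\ell = 4$ every vertex is still forced to see both colors (otherwise the opposite color class contains a $K_3$ on the remaining three vertices), giving $\leq 4 w(k-2)$; for $\ell = 3$ a direct inspection shows one apex vertex with one color and the other two vertices with both, yielding $\leq w(k-1) + 2 w(k-2)$; for $\ell = 2$ both parts see the unique cross color, yielding $\leq 2 w(k-1)$. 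Using $w(k) = 5 w(k-2)$ and $w(k-1) \leq 2 w(k-2)$, each total is at most $w(k)$, contradicting $n = w(k) + 1$. The main obstacle is the case $\ell = 5$, which is exactly tight and pins down the recursion $w(k) = 5 w(k-2)$ (matching the lower-bound construction); the other cases are slack and require only routine arithmetic checks in each parity of $k$.
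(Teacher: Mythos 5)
Your proposal is essentially correct, but note that the paper does not prove Theorem~\ref{c} at all: it is quoted as a known result of Chung--Graham and Gy\'arf\'as--S\'ark\"ozy--Seb\H{o}--Selkow, so there is no in-paper proof to compare against. What you have written is, in substance, the standard argument from those references (and it is also the template the present paper follows for its own main theorems): the lower bound by iterated blow-up of the $2$-coloured $K_5$ with two $C_5$ colour classes, and the upper bound by taking a Gallai partition, observing $\ell\le R_2(K_3)-1=5$, and noting that a part seeing colour $i$ on its inter-part edges cannot contain an internal edge of colour $i$, so parts seeing both reduced colours satisfy $|V_i|\le w(k-2)$ and parts seeing one satisfy $|V_i|\le w(k-1)$. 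Your case analysis for $\ell\in\{2,3,4,5\}$ is sound. One small arithmetic slip: the auxiliary inequality $w(k-1)\le 2w(k-2)$ is false when $k$ is odd, where $w(k-1)=\tfrac{5}{2}w(k-2)$; the conclusions you need still hold ($2w(k-1)=w(k)$ with equality for odd $k$ in the case $\ell=2$, and $w(k-1)+2w(k-2)=\tfrac{9}{2}w(k-2)<w(k)$ for $\ell=3$), so the proof survives, but the stated justification should be corrected to the inequality actually used, namely $w(k-1)\le\tfrac{5}{2}w(k-2)$ (equivalently $2w(k-1)\le w(k)$ and $w(k-1)\le 3w(k-2)$) in both parities.
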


We next give the general lower bounds of $GR_k((k-r)K_3,rH)$ with $H\in\{H_1,H_2\}$ and $GR_k((k-s-r)P_3,sK_3,rH_3)$ for all $k\ge1$ and $s,r$ with $0\le s\le k$ and $0\le r\le k$ by construction. For all constructions, we start with an $i$-edge colored complete graph $G_i$ (constructed below) and inductively suppose we have constructed an $i$-edge colored complete graph $G_i$ containing neither rainbow triangle nor appropriately colored monochromatic copy of $H\in\{H_1,H_2,H_3\}$, $K_3$ and $P_3$. For each two unused colors assigned to $H\in\{H_1,H_2,H_3\}$, we construct $G_{i+2}$ by replacing each vertex of $K_{17}$ with a copy of $G_i$ such that $K_{17}$ is colored with this two unused colors without monochromatic $K_4$. For each two unused colors assigned to $K_3$, we construct $G_{i+2}$ by replacing each vertex of $K_5$ with a copy of $G_i$ such that $K_5$ is colored with this two unused colors without monochromatic $K_3$. The base graphs for this construction are constructed as follows.\medskip 

We first construct the base graphs of Theorem \ref{t1.3}. For case $(a_1)$, the base graph $G_2$ is a 2-colored complete graph on $R_2(K_3)-1=5$ vertices without monochromatic copy of $K_3$. For case $(a_2)$, the base graph $G_1$ is a 1-colored complete graph on $|K_3|-1=2$ vertices without monochromatic copy of $K_3$. For case $(a_3)$, the base graph $G_2$ is a 2-colored complete graph on $R(K_3,H)-1=8$ vertices containing neither monochromatic copy of $H$ nor monochromatic copy of $K_3$. For case $(a_4)$, the base graph $G_1$ is a 1-colored complete graph on $|H|-1=4$ vertices without monochromatic copy of $H$.\medskip

We now construct the base graph of Theorem \ref{t1.4} based on the relations between $s+r$ and $k$. Assume that $s+r=k$. For case $(b_1)$, if $k=2$, then $G_2$ is a 2-colored complete graph on $R_2(H_3)-1=21$ vertices without monochromatic copy of $H_3$ when $s=0$, and $R_2(K_3)-1=5$ vertices without monochromatic copy of $K_3$ when $r=0$. Let $G_2$ be colored with colors 1 and 2 and $K_5$ be colored with colors 3 and 4 containing no monochromatic copy of $K_3$. Then the base graph $G_4$ is a 4-colored complete graph on 105 vertices obtained by replacing each vertex of $K_5$ with a copy of $G_2$ under $s=0$. For case $(b_3)$, if $k=1$, then $G_1$ is a 1-colored complete graph on $|K_3|-1=2$ vertices with color 1 containing no monochromatic copy of $K_3$. Let $G^{'}$ be a 2-colored complete graph on $R_2(H_3)-1=21$ vertices with colors 2 and 3 containing no monochromatic copy of $H_3$. Then the base graph $G_3$ is a 3-colored complete graph on 42 vertices obtained by replacing each vertex of $G_1$ with a copy of $G^{'}$. For case $(b_5)$, the base graph $G_2$ is a 2-colored complete graph on $R(K_3,H_3)-1=10$ vertices containing neither monochromatic copy of $H_3$ nor monochromatic copy of $K_3$. For case $(b_7)$, the base graph $G_1$ is a 1-colored complete graph on $|H_3|-1=4$ vertices without monochromatic copy of $H_3$.\medskip

Assume that $s+r<k$. For case $(b_2)$, the base graph $G_{k-s-r}$ is a $(k-s-r)$-colored complete graph on $GR_{k-s-r}(P_3)-1=2$ vertices with colors in $[k]\setminus[s+r]$ containing no monochromatic copy of $P_3$. For case $(b_4)$, the base graph $G_{k-(s-1)-r}$ is a $(k-s-r+1)$-colored complete graph on 4 vertices obtained by joining two copies of $G_{k-s-r}$ such that all edges between two copies are colored with the color assigned to $K_3$. For case $(b_6)$, let $G^{''}$ be a 2-colored complete graph on $R(K_4,K_3)-1=8$ vertices such that $G^{''}$ contains neither monochromatic copy of $K_4$ in color assigned to $H_3$ nor monochromatic copy of $K_3$. Then the base graph $G_{k-(s-1)-(r-1)}$ is a $(k-s-r+2)$-colored complete graph on 16 vertices obtained by replacing each vertex of $G^{''}$ with a copy of $G_{k-s-r}$. For case $(b_8)$, we first suppose $s=0$. Let $K_3$ be a monochromatic graph with the color assigned to $H_3$. Then the base graph $G_{k-s-(r-1)}$ is a $(k-s-r+1)$-colored complete graph on 6 vertices obtained by replacing each vertex of $K_3$ with a copy of $G_{k-s-r}$. We next suppose $s\ge2$. Then the base graph $G_{k-(s-2)-(r-1)}$ is a $(k-s-r+3)$-colored complete graph on 32 vertices obtained by joining two copies of the base graph of case $(b_6)$ such that all edges between two copies are colored with the new color assigned to $K_3$.\medskip

It is easy to check that all the base graphs do not contain rainbow triangle and thus $G_{i+2}$ is the desired construction containing neither rainbow triangle nor appropriately colored monochromatic copy of $H\in\{H_1,H_2,H_3\}$, $K_3$ and $P_3$. Therefore, we see that $GR_k((k-r)K_3,rH)\ge w(k,r)+1$ with $H\in\{H_1,H_2\}$ and $GR_k((k-s-r)P_3,sK_3,rH_3)\ge f(k,s,r)+1$ for all $k\ge1$ and $s,r$ with $0\le s\le k$ and $0\le r\le k$.\medskip

We now present two tables that consist of the ratios of corresponding functions to $w(k,r)$ and $f(k,s,r)$ based on the cases $a_1$-$a_4$ and $b_1$-$b_8$. We refer readers to \textbf{Appendix} and Theorems \ref{t1.3} and \ref{t1.4} for the specific formulas of listed functions in both tables.\medskip

\begin{center}
\begin{tabular}{ | m{1.5cm} | m{.8cm}| m{.8cm} | m{.8cm} | m{.8cm} |} 
\hline
Case&  $~~a_1$ & $~~a_2$ & $~~a_3$ & $~~a_4$\\ 
\hline
$~~$  $\frac{w(k-1,r)}{w(k,r)}$  & $~~\frac{2}{5}$ & $~~\frac{1}{2}$ & $~~\frac{1}{2}$ & $~~\frac{2}{5}$\\ 
\hline
$~~$  $\frac{w(k-2,r)}{w(k,r)}$ & $~~\frac{1}{5}$ & $~~\frac{1}{5}$ & $~~\frac{1}{5}$ & $~~\frac{1}{5}$\\ 
\hline
$~~$  $\frac{w(k,r-1)}{w(k,r)}$ & $~~\frac{8}{17}$ & $~~\frac{10}{17}$ & $~~\frac{5}{8}$ & $~~\frac{1}{2}$\\ 
\hline
$~~$  $\frac{w(k-1,r-1)}{w(k,r)}$ & $~~\frac{4}{17}$ & $~~\frac{4}{17}$ & $~~\frac{1}{4}$ & $~~\frac{1}{4}$\\
\hline
$~$  $\frac{w(k-2,r-1)}{w(k,r)}$ & $~~\frac{8}{85}$ & $~~\frac{2}{17}$ & $~~\frac{1}{8}$ & $~~\frac{1}{10}$\\
\hline
$~~$  $\frac{w(k,r-2)}{w(k,r)}$ & $~~\frac{5}{17}$ & $~~\frac{5}{17}$ & $~~\frac{5}{17}$ & $~~\frac{5}{17}$\\ 
\hline
$~$  $\frac{w(k-1,r-2)}{w(k,r)}$ & $~~\frac{2}{17}$ & $~~\frac{5}{34}$ & $~~\frac{5}{34}$ & $~~\frac{2}{17}$\\ 
\hline
$~$  $\frac{w(k-2,r-2)}{w(k,r)}$ & $~~\frac{1}{17}$ & $~~\frac{1}{17}$ & $~~\frac{1}{17}$ & $~~\frac{1}{17}$\\ 
\hline
\end{tabular}
\\ \medskip
~\\
Table 1: The ratios of corresponding functions to $w(k,r)$.
\end{center}

By using Table 1 and the formulas of corresponding functions, for all $k\ge3$ and $r\ge1$, we have  \medskip \\
$(a)$ $w(k,r)+1>\begin{cases}
3w(k-1,r-1)+r\ge w(k-1,r-1)+k+1,\\
2w(k-1,r)\ge8w(k-2,r-1)\ge 5w(k-2,r-1)+r\ge  w(k-1,r)+k,\\
5w(k-2,r)=17w(k-2,r-2)\ge12w(k-2,r-2)+r,\\
w(k-1,r-1)+w(k,r-1),\\
2w(k-1,r-2)+w(k,r-1)+ w(k-2,r-2)\ge w(k,r-2)\\
~~~~~~+2w(k-1,r-1)\ge w(k-1,r-2)+w(k-2,r-2)\ge3w(k-2,r-2).
\end{cases}$
\begin{center}
\begin{tabular}{| m{2.1cm} | m{1.2cm}| m{1.1cm} | m{1.2cm} | m{1.1cm} | m{1.2cm} | m{1.1cm} | m{1.2cm} | m{1.15cm} |} 
\hline
Case&  $~~~b_1$ & $~~~b_2$ & $~~~b_3$ & $~~~b_4$ & $~~~b_5$ & $~~~b_6$ & $~~~b_7$ & $~~~b_8$\\ 
\hline
$\frac{f(k-1,s-1,r)}{f(k,s,r)}$ & $~~~\frac{2}{5}$ & $~~~\frac{2}{5}$ & $~~~\frac{1}{2}$ & $~~~\frac{1}{2}$ & $~~~\frac{2}{5}$ & $\scriptsize\text{$s=1: \frac{3}{8}$}$  $\scriptsize\text{$s\ge3: \frac{2}{5}$}$ & $~~~\frac{1}{2}$ & $~~~\frac{1}{2}$\\
\hline
$\frac{f(k-2,s-2,r)}{f(k,s,r)}$ & $~~~\frac{1}{5}$  & $~~~\frac{1}{5}$ & $~~~\frac{1}{5}$ & $~~~\frac{1}{5}$ & $~~~\frac{1}{5}$ & $~~~\frac{1}{5}$ & $~~~\frac{1}{5}$ & $\scriptsize\text{$s=2:\frac{3}{16}$}$  $\scriptsize\text{$s\ge4: \frac{1}{5}$}$\\
\hline
$\frac{f(k,s+1,r-1)}{f(k,s,r)}$ & $~~~\frac{10}{21}$ & $~~~\frac{8}{17}$  & $~~~\frac{10}{21}$ & $~~~\frac{8}{17}$ & $\scriptsize\text{$r=1:\frac{1}{2}$}$  $\scriptsize\text{$r\ge3: \frac{21}{34}$}$ & $~~~\frac{5}{8}$ & $\scriptsize\text{$r=1:\frac{1}{2}$}$  $\scriptsize\text{$r\ge3: \frac{21}{34}$}$ & $\scriptsize\text{$s=0:\frac{2}{3}$}$  $\scriptsize\text{$s\ge2:\frac{5}{8}$}$\\
\hline
$\frac{f(k-1,s,r-1)}{f(k,s,r)}$ & $~~~\frac{4}{21}$ & $\scriptsize\text{$s=0:\frac{3}{17}$}$  $\scriptsize\text{$s\ge2: \frac{16}{85}$}$  & $~~~\frac{5}{21}$ & $~~~\frac{4}{17}$ & $\scriptsize\text{$r=1:\frac{1}{5}$}$  $\scriptsize\text{$r\ge3: \frac{21}{85}$}$ & $~~~\frac{1}{4}$ & $\scriptsize\text{$r=1:\frac{1}{4}$}$   $\scriptsize\text{$r\ge3: \frac{21}{68}$}$ & $\scriptsize\text{$s=0:\frac{1}{3}$}$  $\scriptsize\text{$s\ge2:\frac{5}{16}$}$\\
\hline
$\frac{f(k,s,r-1)}{f(k,s,r)}$ & $\scriptsize\text{$s=0:\frac{2}{7}$}$  $\scriptsize\text{$s\ge2:\frac{32}{105}$}$ & $\scriptsize\text{$s=0:\frac{3}{17}$}$  $\scriptsize\text{$s\ge2:\frac{16}{85}$}$ & $~~~\frac{8}{21}$ & $~~~\frac{4}{17}$ & $~~~\frac{2}{5}$ & $~~~\frac{1}{4}$ & $~~~\frac{1}{2}$  & $\scriptsize\text{$s=0:\frac{1}{3}$}$  $\scriptsize\text{$s\ge2:\frac{5}{16}$}$\\
\hline
$\frac{f(k-1,s-1,r-1)}{f(k,s,r)}$ & $~~~\frac{16}{105}$ & $~~~\frac{8}{85}$ & $\scriptsize\text{$s=1:\frac{1}{7}$}$  $\scriptsize\text{$s\ge3:\frac{16}{105}$}$ & $\scriptsize\text{$s=1:\frac{3}{34}$}$  $\scriptsize\text{$s\ge3:\frac{8}{85}$}$ & $~~~\frac{1}{5}$ & $~~~\frac{1}{8}$ & $~~~\frac{1}{5}$ & $~~~\frac{1}{8}$\\ 
\hline
$\frac{f(k-2,s-1,r-1)}{f(k,s,r)}$ & $~~~\frac{2}{21}$ & $~~~\frac{8}{85}$  & $~~~\frac{2}{21}$ & $\scriptsize\text{$s=1:\frac{3}{34}$}$  $\scriptsize\text{$s\ge3:\frac{8}{85}$}$  & $\scriptsize\text{$r=1:\frac{1}{10}$}$  $\scriptsize\text{$r\ge3:\frac{21}{170}$}$  & $~~~\frac{1}{8}$ & $\scriptsize\text{$r=1:\frac{1}{10}$}$  $\scriptsize\text{$r\ge3:\frac{21}{170}$}$  & $~~~\frac{1}{8}$\\
\hline
$\frac{f(k,s+2,r-2)}{f(k,s,r)}$ & $\scriptsize\text{$r=2:\frac{5}{21}$}$  $\scriptsize\text{$r\ge4:\frac{5}{17}$}$ & $~~~\frac{5}{17}$ &  $\scriptsize\text{$r=2:\frac{5}{21}$}$  $\scriptsize\text{$r\ge4:\frac{5}{17}$}$  & $~~~\frac{5}{17}$ & $~~~\frac{5}{17}$ & $~~~\frac{5}{17}$ & $~~~\frac{5}{17}$ &  $\scriptsize\text{$s=0:\frac{16}{51}$}$  $\scriptsize\text{$s\ge2:\frac{5}{17}$}$ \\ 
\hline
$\frac{f(k,s+1,r-2)}{f(k,s,r)}$ & $~~~\frac{4}{21}$ & $~~~\frac{2}{17}$ & $~~~\frac{5}{21}$ & $~~~\frac{5}{34}$ & $~~~\frac{16}{85}$ & $~~~\frac{2}{17}$ & $~~~\frac{4}{17}$ &  $\scriptsize\text{$s=0:\frac{8}{51}$}$  $\scriptsize\text{$s\ge2:\frac{5}{34}$}$ \\ 
\hline
$\frac{f(k-1,s+1,r-2)}{f(k,s,r)}$ & $\scriptsize\text{$r=2:\frac{2}{21}$}$  $\scriptsize\text{$r\ge4:\frac{2}{17}$}$ & $~~~\frac{2}{17}$ & $\scriptsize\text{$r=2:\frac{5}{42}$}$  $\scriptsize\text{$r\ge4:\frac{5}{34}$}$ & $~~~\frac{5}{34}$ & $~~~\frac{2}{17}$ & $~~~\frac{2}{17}$ & $~~~\frac{5}{34}$ & $\scriptsize\text{$s=0:\frac{8}{51}$}$  $\scriptsize\text{$s\ge2:\frac{5}{34}$}$ \\ 
\hline
$\frac{f(k-2,s,r-2)}{f(k,s,r)}$& $\scriptsize\text{$r=2:\frac{1}{21}$}$  $\scriptsize\text{$r\ge4:\frac{1}{17}$}$ & $~~~\frac{1}{17}$ & $\scriptsize\text{$r=2:\frac{1}{21}$}$  $\scriptsize\text{$r\ge4:\frac{1}{17}$}$ & $~~~\frac{1}{17}$ & $~~~\frac{1}{17}$ & $~~~\frac{1}{17}$ & $~~~\frac{1}{17}$ & $~~~\frac{1}{17}$\\
\hline
  $\frac{f(k-1,s,r-2)}{f(k,s,r)}$ $=\frac{f(k,s,r-2)}{f(k,s,r)}$ & $~~~\frac{2}{21}$ & $~~~\frac{1}{17}$ & $~~~\frac{2}{21}$ & $~~~\frac{1}{17}$ & $~~~\frac{8}{85}$ & $~~~\frac{1}{17}$ &  $\scriptsize\text{$s=0:\frac{3}{34}$}$  $\scriptsize\text{$s\ge2:\frac{8}{85}$}$  &  $~~~\frac{1}{17}$\\
\hline
\end{tabular}
~\\ \medskip
~\\
Table 2: The ratios of corresponding functions to $f(k,s,r)$.
\end{center}

Throughout of the proof, let $\alpha+\{\beta;\gamma\}$ denote $\alpha+\beta$ and $\alpha+\gamma$, and let $\{\alpha;\beta\}+\{\gamma;\theta\}$ denote $\alpha+\gamma$, $\alpha+\theta$, $\beta+\gamma$ and $\beta+\theta$. By using Table 2 and the formulas of corresponding functions, for all $k\ge3$ and $s+r\ge1$, we have\medskip\\
$(b)$ $f(k,s,r)+1>\begin{cases}
2f(k,s,r-1)\ge f(k,s,r-1)+s+r+1,\\
2f(k-1,s-1,r)\ge f(k-1,s-1,r)+s+r,\\
3f(k-1,s,r-1)\ge2f(k-1,s,r-1)+r,\\
8f(k-2,s-1,r-1)\ge5f(k-2,s-1,r-1)+r,\\
f(k-1,s,r-1)+f(k,s+1,r-1),\\
5f(k-2,s-2,r),\\
5f(k-1,s-1,r-1),
\end{cases}$\medskip \\
and for all $k\ge3$ and $r\ge2$, we also have\\
$(c)$ $f(k,s,r)+1>\begin{cases}
f(k-1,s+1,r-2)+2f(k-1,s,r-1)+r,\\
7f(k-1,s,r-2)+r,\\
17f(k-2,s,r-2)\ge14f(k-2,s,r-2)+r,\\
2f(k-1,s,r-1)+\{f(k,s+1,r-2); f(k,s+2,r-2)\},\\
4f(k-1,s,r-2)+f(k,s+1,r-1),\\
\{f(k,s+1,r-2)+f(k,s,r-1); f(k-1,s+1,r-2)+f(k,s+1,r-1)\}\\
~~~~~~~~~~~~~~~~~~~~~~~+\{3f(k-2,s,r-2); 2f(k-1,s,r-2)\},\\
3f(k,s+1,r-2)+f(k-2,s,r-2)+max\{f(k-1,s,r-2), 4\},\\
3f(k-1,s+1,r-2)+\{3f(k-1,s+1,r-2)+f(k-2,s,r-2);\\ 
~~~~~~~~~~~~~~~~~6f(k-2,s,r-2); 11f(k-2,s,r-2)-f(k-1,s+1,r-2)\},\\
3f(k-1,s+1,r-2)+f(k,s+1,r-2)+\{4f(k-2,s,r-2); f(k-1,\\
~~~~~~s+1, k-2)+2f(k-2,s,r-2); 2f(k-1,s,r-2)+f(k-2,s,r-2)\}.
\end{cases}$

\section{Proof of Theorems \ref{t1.3} and \ref{t1.4}}
By constructions in Section 2, it suffices to show that $GR_k((k-r)K_3,rH)\le w(k,r)+1$ with $H\in\{H_1,H_2\}$ and $GR_k((k-s-r)P_3,sK_3,rH_3)\le f(k,s,r)+1$ for all $k\ge1$ and $s,r$ with $0\le s\le k$ and $0\le r\le k$. For simplicity of notations, we use C1.4 and C1.5 to represent the cases $GR_k((k-r)K_3,rH)$ with $H\in\{H_1,H_2\}$ and $GR_k((k-s-r)P_3,sK_3,rH_3)$, respectively. We proceed the proof by induction on $k+r$ for C1.4 and $k+2r$ for C1.5. The case for $k=1$ is trivial. By Theorems \ref{MC}-\ref{SPR}, $GR_2(H)=R_2(H)=w(2,2)+1$, $GR_2(K_3,H)=R_2(K_3,H)=w(2,1)+1$ and $GR_2(K_3)=R_2(K_3)=w(2,0)+1$ for C1.4; $GR_2(H_3)=R_2(H_3)=f(2,0,2)+1$, $GR_2(K_3)=R_2(K_3)=f(2,2,0)+1$, $GR_2(K_3,H_3)=R(K_3,H_3)=f(2,1,1)+1$, $GR_2(P_3,H_3)=R(P_3,H_3)=f(2,0,1)+1$ and $GR_2(P_3,K_3)=R(P_3,K_3)=f(2,1,0)+1$ for C1.5. The case $r=0$ for C1.4 is Theorem \ref{c}, and the cases $s+r=0$ and $s=k$ for C1.5 are Theorems \ref{4} and \ref{c}, respectively. Therefore, we may assume that $k\ge3$ and $1\le r\le k$ for C1.4; $k\ge3$, $1\le s+r\le k$ and $0\le s<k$ for C1.5. Suppose that Theorem \ref{t1.3} holds for all $k^{'}+r^{'}<k+r$ and Theorem \ref{t1.4} holds for all $k^{'}+2r^{'}<k+2r$. Set $G=K_{w(k,r)+1}$ for C1.4 and $G=K_{f(k,s,r)+1}$ for C1.5. Let $c: E(G)\xrightarrow{} [k]$ be any Gallai $k$-coloring of $G$. Suppose $(G,c)$ contains neither monochromatic copy of $H\in\{H_1,H_2\}$ in any of the first $r$ colors nor monochromatic copy of $K_3$ in any of the last $k-r$ colors for C1.4 and $(G,c)$ contains no monochromatic copy of $H_3$ in any of the first $r$ colors, monochromatic copy of $K_3$ in any of the middle $s$ colors and monochromatic copy of $P_3$ in any of the last $k-s-r$ colors for C1.5. Choose $(G,c)$ with $k$ minimum.\medskip

Let $X_1$ and $X_2$ be disjoint sets of $V(G)$ such that $X_1$ is color $i$-adjacent to $X_2$, $i\in [k]$. It is easy to check that $G[X_1\cup X_2]$ contains a monochromatic copy of $H\in\{H_1,H_2\}$ in color $i$ if condition 1 holds or $H\in\{H_1,H_2,H_3\}$ in color $i$ if any one of conditions 2-4 holds.
\begin{enumerate}
\item Both $G[X_1]$ and $G[X_2]$ have an edge in color $i$ and $|X_1\cup X_2|\ge5$.
\item $|X_1|\ge2$ and $G[X_2]$ contains a $K_3$ in color $i$.
\item $|X_1|\ge1$ and $G[X_2]$ contains a $K_4-e$ in color $i$.
\item $G[X_1]$ has an edge in color $i$ and $G[X_2]$ has a $P_3$ in color $i$.
\end{enumerate}

Let $t_1,t_2,\ldots,t_m\in V(G)$ be a maximum sequence of vertices chosen as follows: for each $j\in [m]$, all edges between $t_j$ and $V(G)\setminus\{t_1,t_2,\ldots,t_j\}$ are colored the same color under $c$. Let $T=\{t_1,t_2,\ldots, t_m\}$. Notice that $T$ is possibly empty. For each $t_j\in T$, let $c(t_j)$ be the unique color on the edges between $t_j$ and $V(G)\setminus\{t_1,t_2,\ldots,t_j\}$.
\begin{claim}\label{c1}
$c(t_i)\neq c(t_j)$ for all $i,j\in [m]$ with $i\neq j$. Thus all colors in $\{c(t_1),\ldots,c(t_m)\}$ are assigned to $H\in\{H_1,H_2,H_3\}$.
\end{claim}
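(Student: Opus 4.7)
\emph{Proof plan.} My approach is to prove the two parts of the claim together by examining the earliest index where something goes wrong. Let $j^*\in[m]$ be the smallest index for which either $c(t_{j^*})$ is not an $H$-color, or $c(t_{j^*})=c(t_i)$ for some $i<j^*$; if no such $j^*$ exists, the claim holds. By minimality, $c(t_1),\ldots,c(t_{j^*-1})$ are pairwise distinct $H$-colors, so $j^*-1\le r$ and hence $j^*\le r+1$. Writing $\alpha:=c(t_{j^*})$ and $X:=V(G)\setminus\{t_1,\ldots,t_{j^*}\}$, the displayed inequalities in $(a)$ (e.g., $w(k,r)+1>w(k-1,r-1)+k+1$) and $(b)$ (e.g., $f(k,s,r)+1>2f(k-1,s-1,r)$ and $f(k,s,r)+1>3f(k-1,s,r-1)$) ensure that $|X|\ge|V(G)|-(r+1)$ is large enough to drop color $\alpha$ and invoke the induction hypothesis.

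I would first dispose of $\alpha$ being a $P_3$-color (only in C1.5): since $|X|\ge 2$, any two vertices $u,v\in X$ give a monochromatic $P_3$ on $u,t_{j^*},v$ in color $\alpha$, contradicting the choice of $(G,c)$.

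I would next rule out $\alpha$ being a $K_3$-color: $G[X]$ can contain no $\alpha$-edge (else a monochromatic triangle with $t_{j^*}$), so $G[X]$ is a Gallai $(k-1)$-coloring with one fewer $K_3$-color available. Combining $j^*\le r+1\le k+1$ with the displayed inequalities, $|X|$ exceeds the relevant threshold ($w(k-1,r)+1$ for C1.4 or $f(k-1,s-1,r)+1$ for C1.5), and the induction hypothesis produces a forbidden monochromatic structure in $G[X]$, a contradiction.

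The remaining case is that $\alpha$ is an $H$-color equal to $c(t_i)$ for some $i<j^*$. Then $\{t_i,t_{j^*}\}$ is $\alpha$-adjacent to $X$ and the edge $t_it_{j^*}$ is also $\alpha$-colored. I would split on the structure of the $\alpha$-edges in $G[X]$: if $G[X]$ contains an $\alpha$-colored $P_3$, then condition 4 gives a monochromatic copy of $H$ in color $\alpha$, a contradiction; if $G[X]$ contains no $\alpha$-edge, then $G[X]$ is a Gallai $(k-1)$-coloring, and the displays yield the induction hypothesis through $|X|\ge w(k-1,r-1)+1$ for C1.4 or $|X|\ge f(k-1,s,r-1)+1$ for C1.5. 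The main obstacle is the intermediate subcase in which the $\alpha$-edges of $G[X]$ form a non-empty matching with no $\alpha$-$P_3$; here conditions 1--4 do not directly produce $H_3=K_5-e$ in C1.5. I would resolve this by deleting one endpoint of each matching edge to obtain $X''\subseteq X$ with no $\alpha$-edge, and then applying the induction hypothesis to $G[X'']$ via $|X''|\ge|X|/2$ together with the displayed estimates to reach a contradiction.
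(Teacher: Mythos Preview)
Your unified treatment via the minimal index $j^*$ (yielding the tighter bound $j^*\le r+1$) is a pleasant streamlining of the paper's two-stage argument, which first proves distinctness using only the weaker bound $j\le k+1$ (resp.\ $j\le s+r+1$) and then separately rules out $K_3$- and $P_3$-colors on $T$. The $P_3$-color and $K_3$-color cases you outline match the paper's reasoning.

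The one place where your route diverges substantively is the ``intermediate subcase'' for C1.5 (an $\alpha$-matching in $X$ with no $\alpha$-$P_3$), and here the paper's argument is markedly simpler. Rather than deleting one endpoint from each matching edge and applying induction to a $(k-1)$-coloring of $X''$, the paper observes directly that, since $G[X]$ has no $\alpha$-colored $P_3$, one may \emph{keep all $k$ colors} but downgrade the $\alpha$-constraint from $H_3$ to $P_3$; the induction hypothesis (on $k+2r$) then gives $|X|\le f(k,s,r-1)$, and the display $(b)$ line $f(k,s,r)+1>f(k,s,r-1)+s+r+1$ finishes at once. Your deletion argument does go through, but the needed inequality $\lceil|X|/2\rceil\ge f(k-1,s,r-1)+1$ is only barely met in the extremal cases (e.g.\ $k=3$, $s=0$, $r=1$, where $|X|=5$, $|X''|=3$, $f(2,0,0)+1=3$), and verifying it requires slightly more than what the displays $(b)$ literally supply---you implicitly need the strict inequality $f(k-1,s,r-1)>r$, which is true but must be checked separately. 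The paper's ``swap $H_3$ for $P_3$'' trick sidesteps this numerical tightness entirely; it is also what makes the mixed Gallai--Ramsey formulation with $P_3$'s in Theorem~\ref{t1.4} pay for itself.
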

\begin{proof} 
Suppose that $c(t_i)=c(t_j)$ for some $i,j\in [m]$ with $i\neq j$. We may assume that $t_j$ is the first vertex in the sequence $t_1,\ldots,t_m$ such that $c(t_i)=c(t_j)$ for some $i\in [m]$ with $i<j$. We may further assume that the color $c(t_i)$ is red. Thus the edge $t_it_j$ is colored with red under $c$. Let $A=V(G)\setminus\{t_1,t_2,\ldots,t_j\}$. Then all the edges between $\{t_i,t_j\}$ and $A$ are colored with red under $c$. We start with the arguments on C1.4. By the pigeonhole principle, $j\le k+1$. Note that $|A|\ge|G|-(k+1)\ge3$ for all $k\ge3$ and $1\le r\le k$. Since $t_it_j$ is a red edge, red can not be the color assigned to $K_3$. By condition 1, there is no red edge in $(G[A],c)$. By induction, $|A|\le w(k-1,r-1)$. Then by $(a)$, $|G|\le w(k-1,r-1)+k+1<w(k,r)+1$, contrary to the fact that $|G|=w(k,r)+1$. We now turn to the discussion on C1.5. By the property of $T$, we can see that $c(t_{j-1})$ and $c(t_j)$ are the only two colors in $\{c(t_1),\ldots,c(t_j)\}$ that could be assigned to $P_3$. Since $|G|-s-r\ge3$ for all $k\ge3$, $1\le s+r\le k$ and $0\le s<k$, no color in $\{c(t_1),\ldots,c(t_j)\}$ can be assigned to $P_3$. By the pigeonhole principle, $j\le s+r+1$. Note that $|A|\ge|G|-(s+r+1)\ge3$ for all $k\ge3$, $1\le s+r\le k$ and $0\le s<k$. Since $t_it_j$ is a red edge, red must be the color assigned to $H_3$. By condition 4, there is no red $P_3$ in $(G[A],c)$. By induction, $|A|\le f(k,s,r-1)$. Then by $(b)$, $|G|\le f(k,s,r-1)+s+r+1<f(k,s,r)+1$, which is a contradiction. Thus $c(t_i)\neq c(t_j)$ for all $i,j\in [m]$ with $i\neq j$. \medskip

Similar to the argument above, no color in $\{c(t_1),\ldots,c(t_m)\}$ can be assigned to $P_3$. Thus $|T|\le k$ for C1.4 and $|T|\le s+r$ for C1.5. Suppose that there is a color in $\{c(t_1),\ldots,c(t_m)\}$ which is assigned to $K_3$, say green. Then $G\setminus T$ contains no green edge. By induction, $|G\setminus T|\le w(k-1,r)$ and $|G\setminus T|\le f(k-1,s-1,r)$. By $(a)$ and $(b)$, $|G|\le w(k-1,r)+k<w(k,r)+1$ and $|G|\le f(k-1,s-1,r)+s+r<f(k,s,r)+1$, which are impossible. Thus all colors in $\{c(t_1),\ldots,c(t_m)\}$ have to be assigned to $H\in\{H_1,H_2,H_3\}$.
\end{proof}

By Claim \ref{c1}, we see that $|T|\le r$. Consider a Gallai-partition of $G\setminus T$ with parts $V_1,V_2,\ldots,V_{\ell}$ such that $\ell\ge2$ is as small as possible. Assume that $|V_1|\ge|V_2|\ge \ldots \ge |V_{\ell}|$. Let $\mathcal{G}$ be the reduced graph of $G\setminus T$ with vertices $v_1,\ldots,v_{\ell}$. By Theorem \ref{Gallai}, we may further assume that the edges of $\mathcal{G}$ are colored with red or blue. It is obvious that any monochromatic copy of $H\in\{H_1,H_2,H_3\}$, $K_3$ and $P_3$ in $\mathcal{G}$ would yield a monochromatic copy of  $H\in\{H_1,H_2,H_3\}$, $K_3$ and $P_3$ in $G\setminus T$, respectively. Let
\begin{center}
$\mathcal{V}_r=\{V_i$ $|$ $V_i$ is red-adjacent to $V_1$ under $c$, $i\in\{2,\ldots,\ell\}\}$ and \\
$\mathcal{V}_b=\{V_i$ $|$ $V_i$ is blue-adjacent to $V_1$ under $c$, $i\in\{2,\ldots,\ell\}\}$.\ \ \ \ \ \
\end{center}
Let $R=\bigcup_{V_i\in\mathcal{V}_r}V_i$ and $B=\bigcup_{V_i\in\mathcal{V}_b}V_i$. Then $|G|=|V_1|+|R|+|B|+|T|=|V_1\cup T|+|R|+|B|=|V_1\cup B|+|R|+|T|=|V_1\cup B\cup T|+|R|$, denoted by $(*)$. Without loss of generality, we may assume that $|B|\leq |R|$. It is easily seen that $|R|\ge2$, for otherwise the vertex in $R$ or $B$ can be added to $T$, contrary to the maximality of $m$ in $T$. Thus red can not be assigned to $P_3$ for C1.5.
\begin{claim}\label{c2}
$|V_1|\ge2$.
\end{claim}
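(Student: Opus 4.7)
The plan is to argue by contradiction. Suppose $|V_1|=1$; then every $V_i$ is a singleton, the reduced graph $\mathcal{G}$ coincides with $G\setminus T$, and consequently $(G\setminus T,c)$ is a $2$-edge coloring of a complete graph on $\ell=|G|-|T|$ vertices using only the two colors (red and blue) of the Gallai-partition. By Claim~\ref{c1}, $|T|\le r$, so $\ell\ge |G|-r$. Since $(G\setminus T,c)$ contains no forbidden monochromatic subgraph in either of its two colors, the relevant $2$-color Ramsey number from Theorems~\ref{MC}--\ref{SPR} bounds $\ell$ according to which of $H$, $K_3$, $P_3$ red and blue are assigned to. Taking the worst assignment gives $\ell\le R_2(H)-1=17$ for C1.4 and $\ell\le R_2(H_3)-1=21$ for C1.5.

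For C1.4 the claim then follows by direct numerical comparison with Theorem~\ref{t1.3}: one verifies from the formulas $(a_1)$--$(a_4)$ that $w(k,r)\ge 17+r$ for every $k\ge 3$ and $1\le r\le k$, whence $|G|=w(k,r)+1>17+r\ge \ell+|T|=|G|$, a contradiction. For C1.5 I would carry out an analogous case-by-case check against the eight formulas $(b_1)$--$(b_8)$ of Theorem~\ref{t1.4}, noting that the available Ramsey bound is actually determined by which color types exist (for instance, if $r=1$ and $s=0$ then only the $(H_3,P_3)$ and $(P_3,P_3)$ configurations are possible, yielding $\ell\le R(P_3,H_3)-1=6$). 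In every case one obtains $|G|>\ell_{\max}+r$ strictly, \emph{except} the unique borderline case $(s,r)=(0,1)$, where $f(k,0,1)=6$ forces $|G|=7$, $|T|=1$, and $\ell=6$.

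The main obstacle is this borderline case. Here the two colors on $\mathcal{G}$ must be the unique $H_3$-color (say red) and some $P_3$-color (say blue), and by Claim~\ref{c1} $c(t_1)$ is red, so $t_1$ is red-adjacent to every vertex of $G\setminus T$. Since $(G\setminus T,c)$ contains no blue $P_3$, the blue edges form a matching $M$ of size at most $3$, so the red edges contain $K_6-M$. Even in the extremal case $K_6-M=K_{2,2,2}$, one can extract a red copy of $K_4-e$ by taking two vertices from one part of the tripartition together with one vertex from each of the other two parts. Applying condition~3 from the list preceding Claim~\ref{c1} with $X_1=\{t_1\}$ and $X_2$ the vertex set of this red $K_4-e$ then produces a red $K_5-e=H_3$, contradicting the choice of $(G,c)$ and completing the proof.
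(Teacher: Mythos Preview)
Your proof is correct, but it takes a somewhat different route from the paper's and does a bit of extra work.

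The paper exploits the standing hypothesis that $k$ is minimal (so all $k$ colors actually appear in $(G,c)$). Since $|V_1|=1$ forces $G\setminus T$ to use only red and blue, every one of the remaining $k-2\ge 1$ colors must lie in $\{c(t_1),\dots,c(t_m)\}$; by Claim~\ref{c1} these are all $H$-colors. This gives, for each possible assignment of types to (red, blue), a sharp lower bound on $r$ (namely $r\ge 1$, $2$, or $3$), after which one compares $|G\setminus T|$ directly with a specific value of $f$ or $w$.

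Your approach instead bounds $|G|\le \ell_{\max}+r$ using only $|T|\le r$, and then checks numerically that $w(k,r)+1$ or $f(k,s,r)+1$ exceeds this. That check does go through (your sketch for C1.5 is valid), and your $K_4-e$ argument for the borderline $(s,r)=(0,1)$ is correct. However, that borderline is in fact vacuous: once you note $c(t_1)$ equals the unique $H_3$-color red, the whole graph uses only the two colors red and blue, contradicting $k\ge 3$ by minimality. So the ad hoc structural argument, while fine, can be replaced by a one-line appeal to minimality --- which is exactly what the paper's ``$r\ge 2$'' observation is doing in that configuration. The paper's route is therefore more uniform; yours trades that uniformity for a direct numerical verification plus one targeted structural step.
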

\begin{proof}
Suppose that $|V_1|=1$. Then $(G\setminus T,c)$ is only colored with red and blue. We may first assume that red is the color assigned to $K_3$ while blue is the color assigned to $P_3$. By Theorem \ref{SPR}, $\ell\le R(P_3,K_3)-1=4$. By Claim \ref{c1} and $k\ge3$, we see that $r\ge1$. But then $|G\setminus T|\ge f(3,1,1)>4$, a contradiction. We next assume that red is the color assigned to $H_3$ while blue is the color assigned to $P_3$. By Theorem \ref{SPR}, $\ell\le R(P_3,H_3)-1=6$. By Claim \ref{c1} and $k\ge3$, we have $r\ge2$. Then $|G\setminus T|\ge f(3,0,2)-1>6$, which is also a contradiction. Now we give the proofs of C1.4 and C1.5 together. Assume that red and blue are the colors assigned to $K_3$. By Theorem \ref{GG}, $\ell\le R_2(K_3)-1=5$. By Claim \ref{c1} and $k\ge3$, we have $r\ge1$. Then $|G\setminus T|\ge f(3,2,1)=w(3,1)>5$, a contradiction. We next assume that red is the color assigned to $H\in\{H_1,H_2,H_3\}$ while blue is the color assigned to $K_3$. By Theorem \ref{MC}, $\ell\le R(K_3,H)-1=8$ for $H\in\{H_1,H_2\}$ and $\ell\le R(K_3,H_3)-1=10$. By Claim \ref{c1} and $k\ge3$, we have $r\ge2$. Then  $|G\setminus T|\ge f(3,1,2)-1>w(3,2)-1>\ell$, a contradiction. Finally, we assume that red and blue are the colors assigned to $H\in\{H_1,H_2,H_3\}$. By Theorem \ref{R2}, $\ell\le R_2(H)-1=17$ for $H\in\{H_1,H_2\}$ and $\ell\le R_2(H_3)-1=21$. By Claim \ref{c1} and $k\ge3$, we have $r\ge3$. Then $|G\setminus T|\ge f(3,0,3)-2=w(3,3)-2>\ell$, which is impossible. Hence, $|V_1|\ge2$.
\end{proof}
\begin{claim}\label{c3}
$|B|\ge1$ when red is the color assigned to $K_3$ and $|B|\ge2$ when red is the color assigned to $H\in\{H_1,H_2,H_3\}$.
\end{claim}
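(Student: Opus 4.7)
\emph{Proof plan.} I will show that if either bound on $|B|$ fails, a careful combination of conditions 1--4, identity $(*)$, Claim \ref{c1}, and the induction hypothesis (packaged in $(a), (b)$) forces $|G|<w(k,r)+1$ in C1.4 or $|G|<f(k,s,r)+1$ in C1.5, a contradiction.

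\emph{Red is a $K_3$-color, suppose $|B|=0$.} Then $V_1$ is red-adjacent to $R$ with $|V_1|,|R|\ge 2$. Any red edge inside $V_1$, inside $R$, or between two parts of $R$ would, combined with the bipartite red structure, produce a red $K_3$ in $G$; so $G[V_1]$ and $G[R]$ have no red edge. By Claim \ref{c1}, every $c(t_j)$ is an $H$-color, hence never red, so edges within $T$ and between $T$ and $V_1\cup R$ are also non-red. Thus $G[V_1\cup T]$ and $G[R\cup T]$ are Gallai $k$-colorings avoiding red; the induction hypothesis (treating them as $(k-1)$-colorings with one fewer $K_3$-color) gives $|V_1\cup T|,|R\cup T|\le w(k-1,r)$ (respectively $f(k-1,s-1,r)$). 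Identity $(*)$ then yields $|G|\le 2w(k-1,r)$, contradicting $|G|=w(k,r)+1$ via $(a)$. The C1.5 case is identical with $f$ and $(b)$.

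\emph{Red is an $H$-color, suppose $|B|\le 1$.} A red $K_3$ in $G[V_1]$ or $G[R]$ would trigger condition 2 with the opposite side (size $\ge 2$) as $X_1$, producing a red $H_3\supseteq H_2\supseteq H_1$, a contradiction; so neither contains a red $K_3$. Simultaneous red edges in $G[V_1]$ and $G[R]$ together with $|V_1\cup R|\ge 5$ (automatic unless $|G|$ is far too small) trigger condition 1 and produce red $H_1$ or $H_2$. Hence at least one of $V_1, R$ (WLOG $V_1$) is red-edge-free. Let $t^*\in T$ be the unique vertex, if any, with $c(t^*)=\text{red}$ (uniqueness by Claim \ref{c1}). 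If $t^*$ exists, any red edge $ab\subseteq R$ forms a red $K_3$ $\{t^*,a,b\}$ to which $V_1$ is fully red-adjacent, again firing condition 2; so $G[R]$ is red-edge-free too in that subcase. For $|B|=1$ with $t^*$ existing, the maximality of $T$ forces $b$ to be red-adjacent to some $V_i\subseteq R$; picking $a\in V_i$ and any two $c,c'\in V_1$, the five vertices $\{t^*,a,b,c,c'\}$ have non-red edges exactly $\{bc,bc',cc'\}$ (a triangle, since $b$--$V_1$ is blue and $V_1$ is red-edge-free), so the red graph there is $K_5-K_3=H_1$, a contradiction. In the remaining subcases, bounding $|V_1\cup T|$ (or $|V_1\cup T\cup B|$ when $|B|=1$ and no $t^*$ exists) and $|R|$ separately -- by $w(k-1,r-1)$ when red-edge-free, and by $w(k,r-1)$ when only red-$K_3$-free (relabeling red as an extra $K_3$-color) -- identity $(*)$ and the line $w(k,r)+1>w(k-1,r-1)+w(k,r-1)$ of $(a)$ close the argument. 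The C1.5 analogues go through with $f$ and the matching lines of $(b)$.

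\emph{Main obstacle.} The delicate step is the $|B|=1$, $t^*$-existing subcase of Case 2: one must produce an explicit red $H_1$ on the five vertices $\{t^*,a,b,c,c'\}$ by verifying that the three non-red edges form a triangle, so that the red subgraph is precisely $K_5-K_3$. This is the only subcase where a direct combinatorial construction rather than a pure counting argument is required; elsewhere the size contradictions follow routinely from $(*)$ and the inequalities $(a)$ or $(b)$.
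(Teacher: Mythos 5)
Your first part (red assigned to $K_3$, $|B|=0$) is correct and essentially the paper's argument. The second part has two genuine gaps. First, your explicit construction in the ``$|B|=1$ with $t^*$ existing'' subcase fails: the red graph on $\{t^*,a,b,c,c'\}$ that you exhibit is $K_5$ minus a triangle, i.e.\ the book $K_2+\overline{K_3}$, which has clique number $3$ and chromatic number $3$; it is \emph{not} $H_1$ (from Fig.~1, $H_1$ is a $K_4$ with a pendant vertex, so every $H\in\{H_1,H_2,H_3\}$ contains a $K_4$). So the subcase you single out as the ``main obstacle'' is not actually closed. The irony is that no construction is needed there: once $t^*$ exists, a red edge anywhere in $V_1$ or $R$ together with two vertices from the other side forms a red $K_4-e$, and condition~3 applied with $X_1=\{t^*\}$ yields a red $K_5-e=H_3\supseteq H_2\supseteq H_1$. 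This kills all red edges in $V_1$ and in $R$ regardless of whether $|B|$ is $0$ or $1$, after which $|V_1\cup B|\le w(k-1,r-1)$, $|R|\le w(k-1,r-1)$, $|T|\le r$ and $(*)$, $(a)$ finish the case --- exactly as the paper does.

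Second, your reduction ``hence at least one of $V_1,R$ is red-edge-free'' rests on condition~1, which only produces a monochromatic $H_1$ or $H_2$, never $H_3$. It therefore gives no contradiction in C1.5, where only a red $H_3$ is forbidden, so the sentence ``the C1.5 analogues go through'' is not justified: in C1.5 both $G[V_1]$ and $G[R]$ may contain red edges, and the paper must instead invoke condition~4 to forbid a red $P_3$ in each, bounding $|V_1\cup B\cup T|$ and $|R|$ by $f(k,s,r-1)$ (red demoted to a $P_3$-color) rather than by a red-edge-free bound. This is a different induction call with a different inequality from $(b)$, not a routine transcription of the C1.4 computation. A smaller point: ``WLOG $V_1$ is red-edge-free'' is not a true symmetry, since $(*)$ and the available induction bounds treat $V_1\cup B\cup T$ and $R$ asymmetrically, and when $t^*\in T$ is red you cannot bound $|V_1\cup T|$ by a red-edge-free quantity --- you must bound $|V_1\cup B|$ and add $|T|\le r$ separately.
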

\begin{proof}
Let red be the color assigned to $K_3$. Suppose $|B|=0$. Recall that $|R|\ge2$. By Claims \ref{c1} and \ref{c2}, there is no red edge in either $(G[V_1\cup T],c)$ or $(G[R],c)$. By induction, $|V_1\cup T|\le w(k-1,r)$, $|R|\le w(k-1,r)$ for C1.4 and $|V_1\cup T|\le f(k-1,s-1,r)$, $|R|\le f(k-1,s-1,r)$ for C1.5. By $(*)$, $(a)$ and $(b)$, we have $|G|\le 2w(k-1,r)<w(k,r)+1$ and $|G|\le 2f(k-1,s-1,r)<f(k,s,r)+1$, contrary to the facts that $|G|=w(k,r)+1$ and $|G|=f(k,s,r)+1$.\medskip

Let red be the color assigned to $H\in\{H_1,H_2,H_3\}$. Suppose that $|B|\le 1$. We may first assume that no vertex in $T$ is red-adjacent to $V(G)\setminus T$ under $c$. If $(G[V_1],c)$ contains no red edge, then there is no red edge in $(G[V_1\cup B\cup T],c)$. Recall that $|V_1|\ge2$. So by condition 2, there can not be red $K_3$ in $(G[R],c)$. By induction, $|V_1\cup B\cup T|\le w(k-1,r-1)$, $|R|\le w(k,r-1)$ for C1.4 and $|V_1\cup B\cup T|\le f(k-1,s,r-1)$, $|R|\le f(k,s+1,r-1)$ for C1.5. By $(*)$, $(a)$ and $(b)$, we have $|G|\le w(k-1,r-1)+w(k,r-1)<w(k,r)+1$ and $|G|\le f(k-1,s,r-1)+f(k,s+1,r-1)<f(k,s,r)+1$, which are impossible. Thus $(G[V_1],c)$ contains red edges. Note that $|V_1\cup R|\ge5$ for C1.4 as $|G|>5+r$. By conditions 1 and 4, there is no red edge in $(G[R],c)$ and no red $K_3$ in $(G[V_1],c)$ for C1.4. Similar to the argument above, $|G|<w(k,r)+1$. As for C1.5, by condition 4, we see that $(G[R],c)$ has no red $P_3$. Similar to above, we only need to consider the case that  $(G[R],c)$ contains red edges. By condition 4, there is no red $P_3$ in $(G[V_1],c)$ and neither is $(G[V_1\cup B\cup T],c)$. By induction, $|V_1\cup B\cup T|\le f(k,s,r-1)$ and $|R|\le f(k,s,r-1)$. By $(*)$ and $(b)$, we have $|G|\le 2f(k,s,r-1)<f(k,s,r)+1$, a contradiction.\medskip 

We next assume that $T$ has a vertex which is red-adjacent to $V(G)\setminus T$ under $c$. By condition 3, there can not be red edge in either $(G[V_1],c)$ or $(G[R],c)$ and so $(G[V_1\cup B],c)$ contains no red edge. By induction, $|V_1\cup B|\le w(k-1,r-1)$, $|R|\le w(k-1,r-1)$ for C1.4 and $|V_1\cup B|\le f(k-1,s,r-1)$, $|R|\le f(k-1,s,r-1)$ for C1.5. By $(*)$, $(a)$ and $(b)$, we have $|G|\le 2w(k-1,r-1)+r<w(k,r)+1$ and $|G|\le 2f(k-1,s,r-1)+r<f(k,s,r)+1$, which are impossible.
\end{proof}
By Claims \ref{c2} and \ref{c3}, we see that blue is not the color assigned to $P_3$ for C1.5. Thus we only need to consider the following three cases.
\begin{case}
Red and blue are the colors assigned to $K_3$.	
\end{case}
By Claims \ref{c1}-\ref{c3}, $(G[V_1\cup T],c)$ contains neither red nor blue edge. By induction, $|V_1\cup T|\le w(k-2,r)$ for C1.4 and $|V_1\cup T|\le f(k-2,s-2,r)$ for C1.5. By Theorem \ref{GG}, $\ell \le R_2(K_3)-1=5$. Then by $(*)$, $(a)$ and $(b)$, we have $|G|\le 5w(k-2,r)<w(k,s)+1$ and $|G|\le 5f(k-2,s-2,r)<f(k,s,r)+1$, contrary to the facts that $|G|=w(k,r)+1$ and $|G|=f(k,s,r)+1$.
\begin{case}
Red is the color assigned to $H\in\{H_1,H_2,H_3\}$ while blue is the color assigned to $K_3$.	
\end{case}
We claim that no vertex in $T$ is red-adjacent to $V(G)\setminus T$ under $c$. Suppose not. Then by condition 3, there is no red edge in either $(G[V_1],c)$ or $(G[R],c)$. By Claim \ref{c3}, there is no blue edge in $(G[V_1],c)$ and thus $(G[V_1],c)$ contains neither red nor blue edge. By induction, $|V_1|\le w(k-2,r-1)$ for C1.4 and $|V_1|\le f(k-2,s-1,r-1)$ for C1.5. In order to avoid blue $K_3$, there has to be at most two parts of $\{V_2,\ldots,V_{\ell}\}$ in $R$, which implies that $|B|\le |R|\le 2|V_1|$. By $(*)$, $(a)$ and $(b)$, we have $|G|\le 5w(k-2,r-1)+r<w(k,r)+1$ and $|G|\le 5f(k-2,s-1,r-1)+r<f(k,s,r)+1$, contrary to the facts that $|G|=w(k,r)+1$ and $|G|=f(k,s,r)+1$.\medskip

By Claim \ref{c3}, we see that no vertex in $T$ is red or blue-adjacent to $V(G)\setminus T$ under $c$. Furthermore, $(G[V_1\cup T],c)$ contains no blue edge. If $(G[V_1],c)$ contains no red edge, then $(G[V_1\cup T],c)$ contains neither red nor blue edge. By induction, $|V_1\cup T|\le w(k-2,r-1)$ and $|V_1\cup T|\le f(k-2,s-1,r-1)$. Note that $\ell\le R(K_3,H)-1=8$ for C1.4. By $(*)$ and $(a)$, we have $|G|\le 8w(k-2,r-1)<w(k,r)+1$, which is impossible. As for C1.5, we know that $\ell\le R(K_3,H_3)-1=10$. If $\ell\le8$, then $|G|\le8|V_1\cup T|$. If $9\le\ell\le10$, then there is a red $K_4$ in the reduced graph $\mathcal{G}$ as $R(K_3,K_4)=9$. We know that a monochromatic $K_4$ in $\mathcal{G}$ has to be four vertices in $G\setminus T$. Thus $|G|\le 6|V_1\cup T|+4$. By $(*)$ and $(b)$, we have $|G|\le 6f(k-2,s-1,r-1)+4\le8f(k-2,s-1,r-1)<f(k,s,r)+1$, which is a contradiction. Thus there exist red edges in $(G[V_1],c)$.\medskip

We first consider the proof of C1.4. Note that $|V_1\cup R|\ge5$ as $|G|>6+r$. By conditions 1 and 2, there is no red edge in $(G[R],c)$ and no red $K_3$ in $(G[V_1\cup T],c)$. By induction, $|B|\le|R|\le w(k-1,r-1)$ and $|V_1\cup T|\le w(k-1,r-1)$. By $(*)$ and $(a)$, we have $|G|\le 3w(k-1,r-1)<w(k,r)+1$, a contradiction. We now consider the proof of C1.5. By condition 4, there is no red $P_3$ in $(G[R],c)$. If $(G[R],c)$ contains no red edge, then by condition 2, there is no red $K_3$ in $(G[V_1\cup T],c)$. By induction, $|V_1\cup T|\le f(k-1,s,r-1)$ and $|B|\le|R|\le f(k-1,s,r-1)$. So by $(*)$ and $(b)$, we have $|G|\le 3f(k-1,s,r-1)<f(k,s,r)+1$, a contradiction. Thus $(G[R],c)$ contains red edges. By condition 4, there is no red $P_3$ in either $(G[V_1\cup T],c)$ or $(G[R],c)$. By induction, $|V_1\cup T|\le f(k-1,s-1,r-1)$. As $(G[R],c)$ contains no red $P_3$, we have the following claim.
\begin{claim}\label{c4}
$|B|\le|R|\le 2|V_1|$.
\end{claim}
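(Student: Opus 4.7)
The plan is to combine the two local obstructions available in this case—no red $P_3$ in $(G[R],c)$ (so red edges in $G[R]$ form a matching) and no blue $K_3$ anywhere in $G$ (since blue is the color assigned to $K_3$)—to pin down the structure of $\mathcal{V}_r$. I would partition $\mathcal{V}_r=\mathcal{V}_r^s\cup\mathcal{V}_r^b$ according to whether a part is a singleton ($|V_i|=1$) or big ($|V_i|\ge2$). The overall strategy is first to determine which reduced-graph edges internal to $\mathcal{V}_r$ can be red, then to bound $|\mathcal{V}_r^s|$ and $|\mathcal{V}_r^b|$, and finally to assemble $|R|$.

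\textbf{Step 1 (red edges force singletons).} If $V_i,V_j\in\mathcal{V}_r$ are joined in red in the reduced graph and $\max(|V_i|,|V_j|)\ge2$, then two vertices in the bigger part together with any vertex in the other part form a red $P_3$ in $G[R]$, a contradiction. Hence every reduced-graph edge incident to a part of $\mathcal{V}_r^b$ but lying inside $\mathcal{V}_r$ must be blue.

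\textbf{Step 2 (bounding the number of big and singleton parts).} Three parts in $\mathcal{V}_r^b$ would then be pairwise blue-adjacent, giving a blue $K_3$ in $G$; thus $|\mathcal{V}_r^b|\le2$. If $|\mathcal{V}_r^b|=2$ and some singleton $\{u\}\in\mathcal{V}_r^s$ existed, the two big parts together with $u$ would be pairwise blue-adjacent (Step 1), again a blue $K_3$; so $|\mathcal{V}_r^b|=2$ forces $|\mathcal{V}_r^s|=0$. If $|\mathcal{V}_r^b|=1$, any two singletons $\{u_1\},\{u_2\}\in\mathcal{V}_r^s$ must be joined in red (a blue edge between them would form a blue $K_3$ with the big part), but three pairwise red singletons yield a red $P_3$ in $G[R]$; so $|\mathcal{V}_r^s|\le2$ in this subcase. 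When $\mathcal{V}_r^b=\emptyset$, $(G[R],c)$ is itself a $2$-colored complete graph of order $|R|$ whose red edges form a nonempty matching $M$. Any independent set of $M$ of size $3$ gives a blue $K_3$, but $M$ has at most $\lfloor|R|/2\rfloor$ edges, so its maximum independent set has size $|R|-|M|\ge\lceil|R|/2\rceil$, which is $\ge3$ whenever $|R|\ge5$; hence $|R|\le4$ in this subcase.

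Putting these pieces together, and using $|V_i|\le|V_1|$ together with $|V_1|\ge2$ from Claim \ref{c2}: the all-singleton case gives $|R|\le4\le2|V_1|$; the $|\mathcal{V}_r^b|=1$ subcase gives $|R|\le|V_1|+|\mathcal{V}_r^s|\le|V_1|+2\le2|V_1|$; and the $|\mathcal{V}_r^b|=2$ subcase gives $|R|=|V_i|+|V_j|\le2|V_1|$. Combined with the earlier assumption $|B|\le|R|$, this yields $|B|\le|R|\le2|V_1|$. The main obstacle is just the case-by-case bookkeeping; no new combinatorial ideas beyond the local red-$P_3$ and blue-$K_3$ obstructions are needed.
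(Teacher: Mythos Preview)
Your proof is correct and uses essentially the same ingredients as the paper---the local obstructions ``no red $P_3$ in $G[R]$'' and ``no blue $K_3$'', together with $|V_1|\ge 2$---to reach the identical three bounds $|R|\le 4$, $|R|\le |V_1|+2$, and $|R|\le 2|V_1|$. The only difference is organizational: the paper first invokes $R(P_3,K_3)=5$ to cap the number of parts in $R$ at four and then splits cases by the number of red edges in the reduced graph on $R$ (two, one, or zero), whereas you split by the number of big parts $|\mathcal{V}_r^b|\in\{0,1,2\}$ and derive the part-count bound by hand; the two case schemes are dual to one another and yield the same conclusions.
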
 
\begin{proof}
Note that $(G[R],c)$ contains no blue $K_3$. By Theorem \ref{SPR}, $R$ has at most $R(P_3,K_3)-1=4$ parts of $\{V_2,\ldots,V_{\ell}\}$ and thus there are at most two independent red edges between the parts in $R$. It's easy to check that $|R|\le4$ when $R$ has two such edges, $|R|\le |V_1|+2$ when $R$ has only one such edge and $|R|\le 2|V_1|$ when $R$ does not contain such an edge. By Claim \ref{c2}, we know that $|V_1|\ge2$. Recall that $|B|\le|R|$. Thus $|B|\le|R|\le 2|V_1|$.
\end{proof}
\noindent
By Claim \ref{c4}, $(*)$ and $(b)$, we have $|G|\le 5f(k-1,s-1,r-1)<f(k,s,r)+1$, which is impossible.
\begin{case}
Red and blue are the colors assigned to $H\in\{H_1,H_2,H_3\}$.	
\end{case}
Define $Y_1=\{V_i:|V_i|=1, i\in\{2,\ldots,\ell\}\}$ and $Y_2=\{V_i: |V_i|\ge2, i\in\{2,\ldots,\ell\}\}$. Then $|Y_1\cup Y_2|=|R\cup B|$. Let $|R\cap Y_t|$ and $|B\cap Y_t|$ be the number of the common parts in $\{V_2,\ldots,V_{\ell}\}$, where $t=1,2$. Suppose $(G[R],c)$ contains no red edge ($res$. red $P_3$) or $(G[B],c)$ contains no blue edge ($res$. blue $P_3$). Clearly, $|R\cap Y_2|\le3$ or $|B\cap Y_2|\le3$, for otherwise there is a blue $H\in\{H_1,H_2,H_3\}$ in $(G[R],c)$ or a red $H\in\{H_1,H_2,H_3\}$ in $(G[B],c)$. In particular, we have the following four facts:
\begin{enumerate}
\item[(1)] If $|R\cap Y_2|=3$ or $|B\cap Y_2|=3$, then $|R\cap Y_1|=0$ or $|B\cap Y_1|=0$,
\item[(2)] If $|R\cap Y_2|=2$ or $|B\cap Y_2|=2$, then $|R\cap Y_1|\le1$ or $|B\cap Y_1|\le1$ ($res$. $|R\cap Y_1|\le2$ or $|B\cap Y_1|\le2$),
\item[(3)] If $|R\cap Y_2|=1$ or $|B\cap Y_2|=1$, then $|R\cap Y_1|\le2$ or $|B\cap Y_1|\le2$ ($res$. $|R\cap Y_1|\le4$ or $|B\cap Y_1|\le4$),
\item[(4)] If $|R\cap Y_2|=0$ or $|B\cap Y_2|=0$, then $|R\cap Y_1|\le4$ or $|B\cap Y_1|\le4$ ($res$. $|R\cap Y_1|\le6$ or $|B\cap Y_1|\le6$),
\item[(5)] $|R|\le 3|V_1|$ or $|B|\le 3|V_1|$.
\end{enumerate}
\begin{proof}
We only consider the proof for $R$. The proof for $B$ is similar. We first consider the case that $(G[R],c)$ contains no red edge. Obviously, all the parts in $R$ are blue-adjacent to each other. In order to avoid a blue $H\in\{H_1,H_2,H_3\}$ in $(G[R],c)$, $R$ has at most three parts unless $|R\cap Y_2|=0$. However, $|R\cap Y_1|\le4$ if $|R\cap Y_2|=0$, for otherwise we obtain a blue $K_5$ that contains all blue $H\in\{H_1,H_2,H_3\}$.\medskip

Now we consider the case that $(G[R],c)$ contains no red $P_3$. We can see that $Y_1$ is blue-adjacent to $Y_2$ in $(G[R],c)$. Also, all the parts of $Y_2$ are blue-adjacent to each other. For fact (1), there is a blue $K_4-e$ in $(G[R\cap Y_2],c)$. By condition 3, if $|R\cap Y_1|\ge1$, then we can obtain a blue $H\in\{H_1,H_2,H_3\}$, which is impossible. Suppose that $|R\cap Y_1|\ge3$ for fact (2). Let $V_i,V_j,V_k\in Y_1$. In order to avoid a red $P_3$, there has to be a blue $P_3$ in $(G[V_i\cup V_j\cup V_k],c)$. Note that $(G[R\cap Y_2],c)$ contains blue edges. By condition 4, we obtain a blue $H\in\{H_1,H_2,H_3\}$, a contradiction.  Suppose that $|R\cap Y_1|\ge5$ for fact (3). Since $R(P_3,K_3)=5$, there is a blue $K_3$ in $(G[R\cap Y_1],c)$. By condition 2, we get a blue $H\in\{H_1,H_2,H_3\}$ as $|Y_1|\ge2$, which is also a contradiction. For fact (4), we see that there are at most $R(P_3,H_3)-1=6$ parts in $(G[R],c)$, for otherwise we can obtain a blue $H_3$ that also contains all blue $H\in\{H_1,H_2\}$. Recall that $|V_1|\ge2$. By facts (1)-(4), the fact (5) holds.
\end{proof}
\begin{claim}\label{c5}
No vertex in $T$ is red or blue-adjacent to $V(G)\setminus T$ under $c$. 
\end{claim}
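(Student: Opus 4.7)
The plan is to prove Claim~\ref{c5} by contradiction: suppose some $t_j\in T$ has $c(t_j)$ equal to red (the blue case is symmetric). The driving observation is that condition 3 applied with $X_1=\{t_j\}$ and $X_2\subseteq V(G)\setminus\{t_1,\ldots,t_j\}$ forbids any red $K_4-e$ in $V_1\cup R\cup B=V(G)\setminus T$, since such a $K_4-e$ together with $t_j$ would produce a red $H$.

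First I would exploit the red-adjacency of $V_1$ to $R$ with $|V_1|,|R|\ge 2$ (Claims~\ref{c2} and \ref{c3}): a red edge in $G[V_1]$ (respectively $G[R]$) would combine with two vertices from $R$ (respectively $V_1$) into a red $K_4-e$, contradicting the previous step. Hence both $G[V_1]$ and $G[R]$ are red-edge-free, and fact~(5) applied to $R$ yields $|R|\le 3|V_1|$; since we chose $|B|\le|R|$, also $|B|\le 3|V_1|$. The rest splits on whether $G[V_1]$ carries a blue edge.

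In the first sub-case $G[V_1]$ has no blue edge, so $V_1$ avoids both red and blue. The inductive hypothesis, legitimately applicable because Case~3 forces $r\ge 2$, yields $|V_1|\le w(k-2,r-2)$ for C1.4 and $|V_1|\le f(k-2,s,r-2)$ for C1.5. Combined with $|R|,|B|\le 3|V_1|$ and $|T|\le r$ from Claim~\ref{c1}, we obtain $|G|\le 7|V_1|+r$, which the entry $w(k-2,r-2)/w(k,r)=1/17$ in Table~1 (respectively its Table~2 analogue) together with $17w(k-2,r-2)<w(k,r)+1$ from $(a)$ (respectively $(c)$) pushes below $w(k,r)+1$ (respectively $f(k,s,r)+1$), a contradiction. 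In the second sub-case $G[V_1]$ contains a blue edge. A brief size check rules out $|V_1|=2$: that would force $|G|\le 14+r$, inconsistent with $w(k,r)+1$ or $f(k,s,r)+1$ for $k\ge 3$, $r\ge 2$. Hence $|V_1\cup B|\ge 5$, so condition~1 forces $G[B]$ to be blue-edge-free. Moreover, condition~2 applied with $X_1=B$ (size $\ge 2$) and any blue $K_3$ in $V_1$ as $X_2$ would yield a blue $H$, so $V_1$ carries no blue $K_3$. Thus $V_1$ is a Gallai coloring that avoids red entirely and in which blue behaves like a $K_3$-color, giving $|V_1|\le w(k-1,r-2)$ for C1.4 and $|V_1|\le f(k-1,s+1,r-2)$ for C1.5 by induction. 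The final count $|G|\le 7|V_1|+r$ falls below $w(k,r)+1$ (respectively $f(k,s,r)+1$) via the entries $w(k-1,r-2)/w(k,r)\le 2/17$ and the analogous Table~2 ratios, completing the contradiction.

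The main obstacle I anticipate is the parity bookkeeping to confirm $7|V_1|+r<w(k,r)+1$ and its C1.5 counterpart uniformly across the sub-cases $a_1$--$a_4$ and $b_1$--$b_8$; these amount to numeric checks against the $\frac{1}{17}$ and $\frac{2}{17}$-type entries in Tables~1 and 2 combined with $(a)$ and $(c)$. A secondary subtlety is tracking that the induction in the second sub-case has $r-2\ge 0$ available, which is precisely what the Case~3 hypothesis (both red and blue are $H$-colors) guarantees.
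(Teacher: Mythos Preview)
Your overall strategy is sound, but the numerical endgame in your second sub-case does not close. You assert $w(k-1,r-2)/w(k,r)\le 2/17$, but Table~1 gives the ratio $5/34$ in cases $a_2$ and $a_3$, and $7\cdot\tfrac{5}{34}=\tfrac{35}{34}>1$. Concretely, at $k=3$, $r=2$ (case $a_2$) one has $w(2,0)=5$ and $w(3,2)=34$, so your bound yields $|G|\le 7\cdot 5+2=37>35=w(3,2)+1$: no contradiction. The same failure occurs for C1.5; for instance in case $b_8$ with $s=0$ the ratio $f(k-1,s+1,r-2)/f(k,s,r)$ equals $8/51$, and $7\cdot\tfrac{8}{51}>1$ (check $k=4$, $s=0$, $r=3$: $7\cdot 16+3=115>103$). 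The problem is that the crude estimate $|R|\le 3|V_1|$ from fact~(5) throws away too much.

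The repair is exactly what the paper does: since $G[R]$ carries no red edge, apply induction \emph{directly to $R$} to get $|R|\le w(k-1,r-1)$ for C1.4 and $|R|\le f(k-1,s,r-1)$ for C1.5, and then $|B|\le|R|$. Together with the bound on $|V_1|$ (from ``no red edge'' alone, or combined with ``no blue $K_3$'' via condition~2) this gives $|G|\le 3w(k-1,r-1)+r$ and $|G|\le f(k-1,s+1,r-2)+2f(k-1,s,r-1)+r$, both of which are listed in $(a)$ and $(c)$. No sub-case split on blue edges in $V_1$ is needed. Two smaller remarks: your appeal to condition~1 to make $G[B]$ blue-edge-free only works for $H_1,H_2$, not for $H_3$ (though you do not actually use that conclusion afterward); and the ``blue case is symmetric'' claim is not accurate, because the standing assumption $|B|\le|R|$ breaks the symmetry --- in the blue case one only gets $|B|\le 3|V_1|$ from fact~(5), and $|R|$ must be bounded by other means. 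The paper treats the blue case separately and its argument there is genuinely longer.
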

\begin{proof}
Suppose that $T$ has a vertex which is red-adjacent to $V(G)\setminus T$ under $c$. By condition 3, there can not be red edge in either $(G[V_1],c)$ or $(G[R],c)$. By Claim \ref{c3} and condition 2, there is no blue $K_3$ in $(G[V_1],c)$. So by induction, $|V_1|\le w(k-1,r-1)$, $|B|\le|R|\le w(k-1,r-1)$ for C1.4 and $|V_1|\le f(k-1,s+1,r-2)$, $|B|\le |R|\le f(k-1,s,r-1)$ for C1.5. By $(*)$, $(a)$ and $(c)$, we have $|G|\le 3w(k-1,r-1)+r<w(k,r)+1$ and $|G|\le f(k-1,s+1,r-2)+ 2f(k-1,s,r-1)+r<f(k,s,r)+1$, contrary to the facts that $|G|=w(k,r)+1$ and $|G|=f(k,s,r)+1$.\medskip

Suppose that $T$ has a vertex which is blue-adjacent to $V(G)\setminus T$ under $c$. By Claim \ref{c3}, we know that $|B|\ge2$. By condition 3 again, there can not be blue edge in either $(G[V_1],c)$ or $(G[B],c)$. Assume that $(G[V_1],c)$ contains red edges. Note that $|V_1\cup R|\ge5$ for C1.4 as $|G|>6+r$. By condition 1, there is no red edge in $(G[R],c)$ for C1.4. By induction, $|V_1|\le w(k-1,r-1)$ and $|B|\le|R|\le w(k-1,r-1)$.  Similar to above, $|G|<w(k,s)+1$. As for C1.5, if $(G[R],c)$ contains no red edge, then by condition 2,  there is no red $K_3$ in $(G[V_1],c)$. By induction, $|V_1|\le f(k-1,s+1,r-2)$ and $|B|\le|R|\le f(k-1,s,r-1)$. Similar to above, $|G|<f(k,s,r)+1$. Thus $(G[R],c)$ contains red edges. By condition 4, there is no red $P_3$ in either $(G[V_1],c)$ or $(G[R],c)$. Thus by induction, $|V_1|\le f(k-1,s,r-2)$. By fact (5), $|B|\le|R|\le 3|V_1|$.  Then by $(*)$ and $(c)$, we have $|G|\le 7f(k-1,s,r-2)+r<f(k,s,r)+1$, which is a contradiction. \medskip

Assume that $(G[V_1],c)$ contains no red edge. Then there is neither red nor blue edge in $(G[V_1],c)$. By induction, $|V_1|\le w(k-2,r-2)$ and $|V_1|\le f(k-2,s,r-2)$. By condition 2, there is no red $K_3$ in $(G[R],c)$ and thus $R$ has at most $R(K_3,H)-1=8$  and $R(K_3,H_3)-1=10$ parts of $\{V_2,\ldots,V_{\ell}\}$ for C1.4 and  C1.5, respectively. Since $(G[B],c)$ has no blue edge, $|B|\le 3|V_1|$ by fact (5). Then by $(*)$, $(a)$ and $(c)$,  we have $|G|\le 12w(k-2,r-2)+r<w(k,r)+1$ and $|G|\le$ $14f(k-2,s,r-2)+r<f(k,s,r)+1$, which are impossible.
\end{proof}

Clearly, if $(G[V_1],c)$ contains red or blue edges, then so does $(G[V_1\cup T],c)$. By Claim \ref{c5}, if $(G[V_1],c)$ does not contain red or blue edge, then neither does $(G[V_1\cup T],c)$. We next consider the following two subcases.
\begin{subcase}
$(G[V_1\cup T],c)$ contains no blue edge.
\end{subcase}
Suppose that there is no red edge in $(G[V_1\cup T],c)$. Then by induction, $|V_1\cup T|\le w(k-2,r-2)$ and $|V_1\cup T|\le f(k-2,s,r-2)$. Clearly, $\ell\le R_2(H)-1=17$ for C1.4. As for C1.5, by condition 2, there is no red $K_3$ in $(G[R],c)$. So there are at most $R(K_3,H_3)-1=10$ parts of $\{V_2,\ldots,v_{\ell}\}$ in $R$. Note that the monochromatic $K_4$ in the reduced graph $\mathcal{G}$ has to be four vertices in $G\setminus T$. Therefore, $|B|\le|R|\le 6|V_1|+4\le8|V_1|$ as $R(K_3,K_4)=9$. Then by $(*)$, $(a)$ and $(c)$, we have $|G|\le 17w(k-2,r-2)<w(k,r)+1$ and $|G|\le 17f(k-2,s,r-2)<f(k,s,r)+1$, contrary to the facts that $|G|=w(k,r)+1$ and $|G|=f(k,s,r)+1$. Thus $(G[V_1\cup T],c)$ contains red edges.\medskip

Note that $|V_1\cup R|\ge5$ for C1.4 as $|G|>6+r$. By conditions 1 and 4, there is no red edge and red $P_3$ in $(G[R],c)$ for C1.4 and C1.5, respectively. If $(G[R],c)$ contains no red edge, then by condition 2, there is no red $K_3$ in $(G[V_1\cup T],c)$. By induction, $|V_1\cup T|\le w(k-1,r-2)\le w(k,r-2)$, $|B|\le|R|\le w(k-1,r-1)$ for C1.4 and $|V_1\cup T|\le f(k-1,s+1,r-2)$, $|B|\le|R|\le f(k-1,s,r-1)$ for C1.5. Then by $(*)$, $(a)$ and $(b)$, we have $|G|\le w(k,r-2)+2w(k-1,r-1)<w(k,r)+1$ and $|G|\le f(k-1,s+1,r-2)+2f(k-1,s,r-1)<f(k,s,r)+1$, which are impossible. Thus $(G[R],c)$ contains red edges for C1.5. By condition 4, there is no red $P_3$ in either $(G[V_1\cup T],c)$ or $(G[R],c)$. By induction, $|V_1\cup T|\le f(k-1,s,r-2)$. By fact (5), we know that $|B|\le |R|\le3|V_1|$. Then by $(*)$ and $(b)$, we have $|G|\le 7f(k-1,s,r-2)<f(k,s,r)+1$, which is a contradiction.
\begin{subcase}\label{sub 3.2}
$(G[V_1\cup T],c)$ contains blue edges.	
\end{subcase}
Before starting the proof of Subcase \ref{sub 3.2}, we first study many related properties of $(G[B],c)$. Suppose that $(G[B],c)$ contains no blue edge. Then we have the following facts.
\begin{enumerate}
\item[(6)] If $|B\cap Y_2|=3$, then $|B|\le3w(k-2,r-2)$ for C1.4 and $|B|\le3f(k-2,s,r-2)$ for C1.5,
\item[(7)] If $|B\cap Y_2|\le2$, then $|B|\le |V_1|+w(k-2,r-2)$ for C1.4 and $|B|\le 2f(k-1,s,r-2)$ or $|B|\le |V_1|+f(k-2,s,r-2)$ for C1.5.
\end{enumerate}
\begin{proof}
Let $|B\cap Y_2|=3$. By condition 3, there can not be red edge in each part of $B\cap Y_2$. By induction and fact (1), the fact (6) holds. Let $|B\cap Y_2|\le1$. By facts (3) and (4), we have $|B|\le |V_1|+2$. Let $|B\cap Y_2|=2$. By fact (2), we know that $|B\cap Y_1|\le1$. Assume that $|B\cap Y_1|=1$. By the same reason above, we have $|B|\le 2w(k-2,r-2)+1$ for C1.4 and $|B|\le 2f(k-2,s,r-2)+1$ for C1.5. We next assume that $|B\cap Y_1|=0$. Now $B$ only has two parts. By condition 1, we can see that there is at least one part without red edge for C1.4 unless $|B|=4$. By induction, $|B|\le |V_1|+w(k-2,r-2)$. For C1.5, if both two parts have red edges, then there can not be red $P_3$ in either of them by condition 4. So by induction, $|B|\le 2f(k-1,s,r-2)$. If there is at least one part without red edge, then by induction, $|B|\le |V_1|+f(k-2,s,r-2)$. 
\end{proof}

\textbf{Proof of C1.4.} Suppose that $(G[V_1\cup T],c)$ contains red edges.  Then $(G[V_1\cup T],c)$ contains red and blue edges and so does $(G[V_1],c)$ by Claim \ref{c5}.  This means that $|V_1|\ge3$. By Claim \ref{c3}, we have $|B|\ge2$. Recall that $|R|\ge2$. So by conditions 1 and 2, there is no red edge in $(G[R],c)$ and neither red nor blue $K_3$ in $(G[V_1\cup T],c)$. By induction, $|V_1\cup T|\le w(k,r-2)$ and $|B|\le |R|\le w(k-1,r-1)$. By $(a)$, $|G|=|V_1\cup T|+|R|+|B|\le w(k,r-2)+2w(k-1,r-1)<w(k,r)+1$, a contradiction. Thus there is no red egde in $(G[V_1\cup T],c)$. Similar to above, there is no blue $K_3$ in $(G[V_1\cup T],c)$ and no red $K_3$ in $(G[R],c)$. By induction, $|V_1\cup T|\le w(k-1,r-2)$ and $|R|\le w(k,r-1)$. Note that $|V_1\cup B|\ge5$ as $|G|>w(k,r-1)+r+4$ and $r\ge2$. So there is no blue edge in $(G[B],c)$. Thus by $(a)$ and facts (6) and (7), we can get that $|B|\le 3(k-2,r-2)\le w(k-1,r-2)+w(k-2,r-2)$. Then by $(*)$ and $(a)$, we have $|G|\le 2w(k-1,r-2)+w(k,r-1)+ w(k-2,r-2)<w(k,r)+1$, contrary to the fact that $|G|=w(k,r)+1$.\medskip

\textbf{Proof of C1.5.} By condition 4, we see that there can not be blue $P_3$ in $(G[B],c)$. We then consider following two subcases based on Subcase \ref{sub 3.2}.
\begin{Subcase}
$(G[B],c)$ contains blue edges. 
\end{Subcase}
By condition 4, there is no blue $P_3$ in $(G[V_1\cup T],c)$. Suppose that $(G[V_1\cup T],c)$ contains red edges. If $(G[R],c)$ has red edges, then by condition 4, there is no red $P_3$ in either $(G[V_1\cup T],c)$ or $(G[R],c)$. By induction, $|V_1\cup T|\le f(k,s,r-2)$. By fact (5), we have $|B|\le|R|\le3|V_1|$. Then by $(*)$ and $(b)$, we have $|G|\le 7f(k,s,r-2)=7f(k-1,s,r-2)<f(k,s,r)+1$, a contradiction. If $(G[R],c)$ contains no red edge, then by condition 2, there is no red $K_3$ in $(G[V_1\cup T],c)$. By induction, $|V_1\cup T|\le f(k,s+1,r-2)$ and $|B|\le|R|\le f(k-1,s,r-1)$. Then by $(*)$ and $(c)$, we have $|G|\le f(k,s+1,r-2)+2f(k-1,s,r-1)<f(k,s,r)+1$, which is a contradiction. So there can not be red edge in $(G[V_1\cup T],c)$. By condition 2, there is no red $K_3$ in $(G[R],c)$. By induction, $|V_1\cup T|\le f(k-1,s,r-2)$ and  $|R|\le f(k,s+1,r-1)$. Since there is no blue $P_3$ in $(G[B],c)$, we have $|B|\le3|V_1|$ by fact (5).  Then by $(*)$ and $(c)$, we have $|G|\le 4f(k-1,s,r-2)+f(k,s+1,r-1)<f(k,s,r)+1$, which is impossible.
\begin{Subcase}
$(G[B],c)$ contains no blue edge. 
\end{Subcase}
By condition 2, there is no blue $K_3$ in $(G[V_1\cup T],c)$. Suppose that $(G[V_1\cup T],c)$ contains red edges. If $(G[R],c)$ contains no red edge, then by condition 2, there is no red $K_3$ in $(G[V_1\cup T],c)$. By induction, $|V_1\cup T|\le f(k,s+2,r-2)$ and $|B|\le|R|\le f(k-1,s,r-1)$. By $(c)$, $|G|=|V_1\cup T|+|R|+|B|\le f(k,s+2,r-2)+2f(k-1,s,r-1)<f(k,s,r)+1$, a contradiction. If $(G[R],c)$ contains red edges, then by condition 4, there can not be red $P_3$ in either $G[V_1\cup T],c)$ or $(G[R],c)$. By induction, $|V_1\cup T|\le f(k,s+1,r-2)$ and $|R|\le f(k,s,r-1)$. Recall that $|B\cap Y_2|\le3$. By $(*)$, $(c)$ and facts (6), (7), we have $|G|\le f(k,s+1,r-2)+f(k,s,r-1)+\{3f(k-2,s,r-2); 2f(k-1,s,r-2)\}<f(k,s,r)+1$, contrary to the fact that $|G|=f(k,s,r)+1$. By fact (7), we now consider the case that $|B|\le f(k,s+1,r-2)+f(k-2,s,r-2)$. As $(G[R],c)$ contains no red $P_3$, we have the following claim.
\begin{claim}\label{c6}
$|R|\le f(k,s+1,r-2)+max\{f(k-1,s,r-2), 4\}$.
\end{claim}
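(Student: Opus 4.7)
\smallskip
\noindent\emph{Proof sketch.}
The plan is to exploit the structure of $(G[R],c)$ coming from the fact that $(G[R],c)$ has no red $P_3$, together with the assumption that $(G[B],c)$ has no blue edge and $|B|\ge 2$. Because no red $P_3$ appears in $R$, the red edges of the Gallai-reduced graph restricted to $R$ form a matching, and any part of $R$ lying in a red-adjacent pair can contain no red edge internally. Consequently the parts of $R$ group naturally into \emph{red-groups}, each being either a single part or a red-adjacent pair of parts, with every edge between distinct red-groups colored blue.

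First I will dispose of the easy case in which $R$ is itself a single red-group. Here I will show directly that $|R|\le f(k,s+1,r-2)$, which already implies the claim. The key point is that a blue $K_3$ inside $R$ together with any two vertices $b,b'\in B$ that are blue-adjacent to its three vertices would produce $9$ blue edges on $5$ vertices (since $bb'$ is forced to be non-blue by the no-blue-edge-in-$B$ hypothesis), and hence a blue copy of $H_3$; a short count using $|B|\ge 2$ and the coloring of the $B$-$R$ reduced edges then rules out blue $K_3$'s inside $R$ entirely. Thus red behaves as an extra $P_3$-forbidden color and blue as an extra $K_3$-forbidden color inside $R$, and applying induction yields the bound.

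The main case is when $R$ splits into at least two red-groups. I let $R_1$ be the red-group of largest vertex-count and $R_2 = R\setminus R_1$, so every $R_1$-$R_2$ edge is blue. When $|R_1|,|R_2|\ge 2$, an internal blue $K_3$ inside either piece together with any two vertices of the other piece would yield $9$ blue edges on $5$ vertices, hence a blue $H_3$; therefore neither piece contains a blue $K_3$, and induction delivers $|R_1|\le f(k,s+1,r-2)$. For $R_2$ I will further split on whether $R_2$ contains a blue edge: if not, then red and blue are both effectively $P_3$-converted and one color is unused inside $R_2$, so $|R_2|\le f(k-1,s,r-2)$ by induction; if $R_2$ does contain a blue edge, then a parallel argument applied between $R_1$ and $R_2$ forces $R_1$ to have no blue $P_3$, and combining the resulting bounds with the $B$-blue constraint reduces to the no-blue-edge case already handled. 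The remaining boundary case is $|R_2|\le 1$, where the constant $4$ in $\max\{f(k-1,s,r-2),4\}$ trivially absorbs the small $R_2$.

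The hard part will be the careful accounting in the boundary cases --- when $|R_2|=1$, when $R_2$ is a single small red-group, or when only the blue-adjacency from $B$ (and not from $R_2$) can be used to block blue $K_3$'s in $R_1$. The constant $4$ in $\max\{f(k-1,s,r-2),4\}$ is precisely what remains from the direct reduced-graph count (using Theorems~\ref{SPR} and~\ref{MC} applied inside $R$, together with facts (1)-(4) restricted to $R$) once the inductive step no longer applies. Keeping track of which $B$-parts are blue- versus red-adjacent to which $R$-parts, while simultaneously managing the interaction between red-pairs and red-singletons inside $R$, is what forces the $\max$ in the statement.
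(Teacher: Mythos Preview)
Your red-group decomposition is a legitimate alternative to the paper's argument, but it differs substantially from what the paper actually does, and your sketch contains one genuine error.

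\textbf{The error.} In the single-red-group case you write that a blue $K_3$ inside $R$ ``together with any two vertices $b,b'\in B$ that are blue-adjacent to its three vertices'' yields a blue $H_3$, and that a short count over the $B$--$R$ reduced edges finishes it. But parts of $B$ are blue-adjacent to $V_1$, not to the parts of $R$; the reduced edges between $B$ and $R$ may all be red, in which case no such $b,b'$ exist. Fortunately this detour is unnecessary: if $R$ is a single red-group it is either one Gallai part (so $|R|\le |V_1|\le f(k,s+1,r-2)$) or a red-matched pair of singletons (so $|R|=2$), and the bound is immediate. Your later appeal to ``the $B$-blue constraint'' in the blue-edge-in-$R_2$ subcase is similarly misplaced; what actually closes that case is $|R_1|\le f(k,s,r-2)=f(k-1,s,r-2)$ (no red or blue $P_3$ in $R_1$) together with $|R_2|\le f(k,s+1,r-2)$ (no red $P_3$, no blue $K_3$), and $B$ plays no role.

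\textbf{Comparison with the paper.} The paper never introduces red-groups. It simply observes $|R\cap Y_2|\le 3$ and runs a four-line case split on this quantity, reusing facts~(1)--(4) verbatim and mimicking the proofs of facts~(6),(7): when $|R\cap Y_2|=3$ each large part loses blue entirely (via a blue $K_5-e$ argument among the three mutually blue-adjacent large parts), giving $|R|\le 3f(k-1,s,r-2)$; when $|R\cap Y_2|\le 1$ the singleton count from fact~(3)/(4) gives $|R|\le |V_1|+4$; and $|R\cap Y_2|=2$ splits into two tiny subcases. The inequality $2f(k-1,s,r-2)\le f(k,s+1,r-2)$ from Table~2 then unifies everything. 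Your $R_1/R_2$ split reaches the same bound but with more machinery; the paper's route is shorter precisely because facts~(1)--(5) were set up earlier to be reused here.
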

\begin{proof}
Note that $|R\cap Y_2|\le3$. Similar to the proof of facts (6) and (7), we have $|R|\le 3f(k-1,s,r-2)$ when $|R\cap Y_2|=3$, $|R|\le |V_1|+4\le f(k,s+1,r-2)+4$ when $|R\cap Y_2|\le1$ and $|R|\le 2f(k-1,s,r-2)+2$ when $|R\cap Y_2|=2$ and $|R\cap Y_1|\ge1$. Furthermore, if  $|R\cap Y_2|=2$ and $|R\cap Y_1|=0$, then we have $|R|\le 2f(k,s,r-2)=2f(k-1,s,r-2)$ or $|R|\le |V_1|+f(k-1,s,r-2)\le f(k,s+1,r-2)+f(k-1,s,r-2)$. By Table 2, we know that $2f(k-1,s,r-2)\le f(k,s+1,r-2)$. Therefore, $|R|\le 3f(k-1,s,r-2)\le f(k,s+1,r-2)+max\{f(k-1,s,r-2), 4\}$.
\end{proof}
\noindent
Recall that $|V_1\cup T|\le f(k,s+1,r-2)$ and $|B|\le f(k,s+1,r-2)+f(k-2,s,r-2)$. By Claim \ref{c6} and $(c)$, we have $|G|=|V_1\cup T|+|R|+|B|\le3f(k,s+1,r-2)+f(k-2,s,r-2)+max\{f(k-1,s,r-2),4\}<f(k,s,r)+1$, contrary to the fact that $|G|=f(k,s,r)+1$.\medskip

We now see that $(G[V_1\cup T],c)$ contains no red edge. By condition 2, there can not be red $K_3$ in $(G[R],c)$. By induction, $|V_1\cup T|\le f(k-1,s+1,r-2)$ and $|R|\le f(k,s+1,r-1)$. Then by $(*)$, $(c)$ and facts (6), (7),  we have $|G|\le f(k-1,s+1,r-2)+f(k,s+1,r-1)+\{3f(k-2,s,r-2); 2f(k-1,s,r-2)\}<f(k,s,r)+1$, contrary to the fact that $|G|=f(k,s,r)+1$. By fact (7), it remains to consider the case that $|B|\le f(k-1,s+1,r-2)+f(k-2,s,r-2)$. As $(G[R],c)$ contains no red $K_3$, we have the following claim.
\begin{claim}\label{c7}
 $|R|\le10f(k-2,s,r-2)$.
\end{claim}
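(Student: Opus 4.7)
The plan is to combine two bounds: the reduced subgraph on $R$ has at most $10$ parts, and each part $V_i\in R$ satisfies $|V_i|\le f(k-2,s,r-2)$. Multiplying these yields $|R|\le 10f(k-2,s,r-2)$.

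For the bound on the number of parts of $R$, observe that $(G[R],c)$ contains no red $K_3$ (already established immediately before the claim via condition~2 with $X_1=V_1$ and $|V_1|\ge 2$) and $(G,c)$ contains no blue $H_3$ (blue is one of the $r$ colors assigned to $H_3$). Any red $K_3$ or blue $H_3$ in the reduced subgraph on $R$ would lift to one in $G[R]$ or $G$ respectively, by choosing a single representative from each relevant part. Since $R(K_3,H_3)=11$ by Theorem~\ref{MC}, the reduced subgraph on $R$, being a $2$-colored complete graph with neither red $K_3$ nor blue $H_3$, has at most $10$ vertices.

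For the bound on each part, I would show that $(G[V_i],c)$ contains neither a red nor a blue edge; the induction hypothesis on $k+2r$ then yields $|V_i|\le f(k-2,s,r-2)$, since $(G[V_i],c)$ uses only the $k-2$ colors other than red and blue. To rule out a red edge $uv\in V_i$, combine it with a blue edge $w_1w_2\in V_1$ (which exists by the hypothesis of the current subcase together with Claim~\ref{c5}): since $V_1$ is red-adjacent to $V_i$, the set $\{u,v,w_1,w_2\}$ forms a red $K_4-w_1w_2$. If any other part $V_j\in R$ were red-adjacent to $V_i$ in the reduced graph, then picking $y\in V_j$ would make $y$ red-adjacent to all four of $\{u,v,w_1,w_2\}$, so condition~3 would force a red $H_3$, a contradiction. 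Hence every other part of $R$ is blue-adjacent to $V_i$ in the reduced graph, and a parallel analysis — using another blue-adjacent $R$-part of size at least $2$ and a further blue-neighbor located within the $\le 10$-part structure — closes up a blue $K_5-e=H_3$ via condition~3. The analogous argument rules out a blue edge in $V_i$: a hypothetical blue edge together with a blue-adjacent companion part in $R$ creates a blue $K_3$ or $K_4-e$, and condition~2 or condition~3 applied with at least two blue-neighbors drawn from the reduced graph on $R$ produces a blue $H_3$.

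The principal obstacle is carrying out the case analysis in the second step — one must always locate the auxiliary blue or red neighbor within the $\le 10$-part structure on $R$ so as to close up a monochromatic $K_5-e$ via the appropriate condition. This uses crucially that $V_1$ has blue but no red edges, that parts of $B$ are pairwise red-adjacent (since $(G[B],c)$ has no blue edge), and that the reduced subgraph on $R$ contains no red $K_3$. When $R$ has only one part, the bound $|R|=|V_i|\le|V_1|\le f(k-1,s+1,r-2)\le 10f(k-2,s,r-2)$ holds trivially from the ratios of Table~2, so the subtle argument is only needed when $R$ has at least two parts, precisely the regime in which the auxiliary neighbor is guaranteed to exist. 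Once this verification is complete, the inequality $|R|\le 10f(k-2,s,r-2)$ follows by multiplication.
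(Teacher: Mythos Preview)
Your bound on the number of parts of $R$ (at most $10$, via $R(K_3,H_3)=11$) is fine and matches the paper. The gap is in the second step: you try to show that every part $V_i\subseteq R$ is free of both red and blue edges by a direct forbidden-$H_3$ argument, and this cannot be closed as written. After you deduce that every other $R$-part is blue-adjacent to a part $V_i$ carrying a red edge, your ``parallel analysis'' does not force a blue $H_3$. For example, if $R$ has exactly two parts, $V_i$ (containing the red edge $uv$) and a one-vertex part $V_j=\{y\}$ blue-adjacent to $V_i$, then the only blue edges incident with $V_i$ go to the single vertex $y$, while $V_1$ sees both parts in red; no blue $K_4-e$ or $K_5-e$ materialises. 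Nothing in this local picture bounds $|V_i|$ by $f(k-2,s,r-2)$.

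The paper does not attempt a purely structural argument here. Instead it keeps the global hypothesis $|G|=f(k,s,r)+1$ in play: assuming some part of $R$ carries a red edge, it observes (via the red-$K_3$ ban) that this part is blue-adjacent to all other $R$-parts, hence the reduced graph on $R$ has at most $R_2(K_3)=6$ parts; it then bounds $|R|$ by $4|V_1|$ (if $\le 4$ parts) or by $|V_1|+5f(k-2,s,r-2)$ (if $5$ or $6$ parts, after a further argument that the remaining parts are red- and blue-free), and in each case combines with the already-established bounds $|V_1\cup T|\le f(k-1,s+1,r-2)$ and $|B|\le f(k-1,s+1,r-2)+f(k-2,s,r-2)$ to contradict $|G|=f(k,s,r)+1$ via inequality~$(c)$. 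A parallel (and longer) elimination handles the case where some part of $R$ has a blue edge. Only after both cases are ruled out does one conclude that every part of $R$ is red- and blue-free, whence $|R|\le 10f(k-2,s,r-2)$. In short, the claim is not provable from the local coloring constraints alone; you must feed the intermediate $|R|$-bounds back into the size hypothesis on $|G|$.
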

\begin{proof}
Suppose that $R$ has a part of $\{V_2,\ldots,V_{\ell}\}$ with red edges, say $V^{r}$. In order to avoid a red $K_3$, $V^{r}$ has to be blue-adjacent to the rest of parts in $R$. Furthermore, there is neither red nor blue $K_3$ in $(G[R\setminus V^{r}],c)$. So $R$ contains at most $R_2(K_3)=6$ parts of $\{V_2,\ldots,V_{\ell}\}$. Note that $R$ has at least 5 parts of $\{V_2,\ldots,V_{\ell}\}$, for otherwise we have $|R|\le 4|V_1|$. Recall that $|V_1\cup T|\le f(k-1,s+1,r-2)$ and $|B|\le f(k-1,s+1,r-2)+f(k-2,s,r-2)$. So by $(*)$ and $(c)$, we have $|G|\le 6f(k-1,s+1,r-2)+f(k-2,s,r-2)<f(k,s,r)+1$, which is a contradiction. Note that each part of $R\setminus V^{r}$ can not only be red or blue-adjacent to the rest of parts in $R\setminus V^{r}$, for otherwise we obtain a blue or red $K_3$ as $R(K_2,K_3)=2$. This means that all parts of $R\setminus V^{r}$ contain neither red nor blue edge. By induction, $|R|\le |V_1|+5f(k-2,s,r-2)$. Then by $(*)$ and $(c)$, we have $|G|\le 3f(k-1,s+1,r-2)+6f(k-2,s,r-2)<f(k,s,r)+1$, a contradiction.\medskip

Suppose that $R$ has a part of $\{V_2,\ldots,V_{\ell}\}$ with blue edges, say $V^{b}$.  Let $N_b(V^{b})$ and $N_r(V^{b})$ be the vertex sets of $R$ such that $R=V^{b}\cup N_r(V^{b})\cup N_b(V^{b})$ and all the vertices of $N_b(V^{b})$ and $N_r(V^{b})$ are blue and red-adjacent to $V^{b}$, respectively. We see that there can not be red $K_3$ and blue $P_3$ in $(G[N_b(V^{b})],c)$. By induction, $|N_b(V^{b})|\le f(k,s+1,r-2)$.  Also, there can not be red edge in $(G[N_r(V^{b})],c)$. Then $|N_r(V^{b})\cap Y_2|\le3$. Similar to the proof of facts (6) and (7), we have $|N_r(V^{b})|\le 3f(k-2,s,r-2)$ when $|N_r(V^{b})\cap Y_2|=3$ and $|N_r(V^{b})|\le |V_1|+f(k-2,s,r-2)$ or $|N_r(V^{b})|\le 2f(k-1,s,r-2)$ when $|N_r(V^{b})\cap Y_2|\le2$. It follows that $|R|\le |V_1|+3f(k-2,s,r-2)+f(k,s+1,r-2)$ when $|N_r(V^{b})\cap Y_2|=3$ and $|R|\le2|V_1|+f(k-2,s,r-2)+f(k,s+1,r-2)$ or $|R|\le|V_1|+2f(k-1,s,r-2)+f(k,s+1,r-2)$ when $|N_r(V^{b})\cap Y_2|\le2$. By $(*)$ and $(c)$, we have $|G|\le 3f(k-1,s+1,r-2)+f(k,s+1,r-2)+\{4f(k-2,s,r-2); f(k-1,s+1,r-2)+2f(k-2,s,r-2); 2f(k-1,s,r-2)+f(k-2,s,r-2)\}<f(k,s,r)+1$, which are impossible.\medskip

By the proof above, we see that all the parts of $\{V_2,\ldots,V_{\ell}\}$ in $R$ containing neither red nor blue edge. Note that there are at most $R(K_3,H_3)-1=10$ parts of $\{V_2,\ldots,V_{\ell}\}$ in $R$. So by induction, we have $|R|\le10f(k-2,s,r-2)$.
\end{proof}
\noindent
Recall that $|V_1\cup T|\le f(k-1,s+1,r-2)$ and $|B|\le f(k-1,s+1,r-2)+f(k-2,s,r-2)$. Then by Claim \ref{c7}, $(*)$ and $(c)$, we have $|G|\le 2f(k-1,s+1,r-2)+11f(k-2,s,r-2)<f(k,s,r)+1$, contrary to the fact that $|G|=f(k,s,r)+1$.\medskip

This completes the proof of Theorems \ref{t1.3} and \ref{t1.4}.

\newpage
\section*{Appendix}
Note that the sequence for all cases in each of the following functions is consistent with $w(k,r)$ or $f(k,s,r)$.
\[w(k-1,r)=~~~~~~~~~~~~~~~~~~~w(k-2,r)=~~~~~~~~~~~~~~~~~~~~w(k,r-1)=~~~~~~~~~~~~~~\]
\[\begin{cases}
\frac{2}{5}\cdot5^{(k-r)/2}\cdot 17^{r/2},\\
5^{(k-r-1)/2}\cdot17^{r/2},\\
4\cdot5^{(k-r-1)/2}\cdot17^{(r-1)/2},\\
\frac{8}{5}\cdot5^{(k-r)/2}\cdot17^{(r-1)/2},
\end{cases}
\begin{cases}
\frac{1}{5}\cdot5^{(k-r)/2}\cdot17^{r/2},\\
\frac{2}{5}\cdot5^{(k-r-1)/2}\cdot17^{r/2},\\
\frac{8}{5}\cdot5^{(k-r-1)/2}\cdot17^{(r-1)/2},\\
\frac{4}{5}\cdot5^{(k-r)/2}\cdot 17^{(r-1)/2},
\end{cases}
\begin{cases}
\frac{8}{17}\cdot5^{(k-r)/2}\cdot17^{r/2},\\
\frac{20}{17}\cdot5^{(k-r-1)/2}\cdot17^{r/2},\\
5\cdot5^{(k-r-1)/2}\cdot17^{(r-1)/2},\\
2\cdot5^{(k-r)/2}\cdot 17^{(r-1)/2},
\end{cases}\]

\[w(k-1,r-1)=~~~~~~~~~~~~~~~w(k-2,r-1)=~~~~~~~~~~~~w(k,r-2)=~~~~~~~~~~~~~~~\]
\[\begin{cases}
\frac{4}{17}\cdot5^{(k-r)/2}\cdot17^{r/2},\\
\frac{8}{17}\cdot5^{(k-r-1)/2}\cdot17^{r/2},\\
2\cdot5^{(k-r-1)/2}\cdot17^{(r-1)/2},\\
5^{(k-r)/2}\cdot17^{(r-1)/2}, 
\end{cases}
\begin{cases}
\frac{8}{85}\cdot5^{(k-r)/2}\cdot17^{r/2},\\
\frac{4}{17}\cdot5^{(k-r-1)/2}\cdot17^{r/2},\\
5^{(k-r-1)/2}\cdot17^{(r-1)/2},\\
\frac{2}{5}\cdot5^{(k-r)/2}\cdot 17^{(r-1)/2},
\end{cases}
\begin{cases}
\frac{5}{17}\cdot5^{(k-r)/2}\cdot17^{r/2},\\
\frac{10}{17}\cdot5^{(k-r-1)/2}\cdot 17^{r/2},\\
\frac{40}{17}\cdot5^{(k-r-1)/2}\cdot17^{(r-1)/2},\\
\frac{20}{17}\cdot5^{(k-r)/2}\cdot17^{(r-1)/2},
\end{cases}\]

\[w(k-1,r-2)=~~~~~~~~~~~~~~~w(k-2,r-2)=~~~~~~~~~~~\]
\[\begin{cases}
\frac{2}{17}\cdot5^{(k-r)/2}\cdot 17^{r/2},\\
\frac{5}{17}\cdot5^{(k-r-1)/2}\cdot17^{r/2},\\
\frac{20}{17}\cdot5^{(k-r-1)/2}\cdot17^{(r-1)/2},\\
\frac{8}{17}\cdot5^{(k-r)/2}\cdot17^{(r-1)/2},
\end{cases}
\begin{cases}
\frac{1}{17}\cdot5^{(k-r)/2}\cdot 17^{r/2},\\
\frac{2}{17}\cdot5^{(k-r-1)/2}\cdot17^{r/2},\\
\frac{8}{17}\cdot5^{(k-r-1)/2}\cdot17^{(r-1)/2},\\
\frac{4}{17}\cdot5^{(k-r)/2}\cdot17^{(r-1)/2}.
\end{cases}\]

\[f(k-1,s-1,r)=~~~~~~~~~~~~~f(k-2,s-2,r)=~~~~~~~~~~~~~~~~~f(k,s+1,r-1)=~~~~~~~~~~~~\]
\[\begin{cases}
\frac{2}{5}\cdot5^{s/2}\cdot\lfloor21\cdot17^{(r-2)/2}\rfloor,\\
\frac{4}{5}\cdot5^{s/2}\cdot17^{r/2},\\
5^{(s-1)/2}\cdot\lfloor21\cdot17^{(r-2)/2}\rfloor,\\
2\cdot5^{(s-1)/2}\cdot17^{r/2},\\
4\cdot5^{(s-1)/2}\cdot17^{(r-1)/2},\\
\lfloor\frac{32}{5}\cdot5^{(s-1)/2}\rfloor\cdot17^{(r-1)/2},\\
2\cdot5^{s/2}\cdot17^{(r-1)/2},\\
16\cdot5^{(s-2)/2}\cdot17^{(r-1)/2},
\end{cases}
\begin{cases}
\frac{1}{5}\cdot5^{s/2}\cdot\lfloor21\cdot17^{(r-2)/2}\rfloor,\\
\frac{2}{5}\cdot5^{s/2}\cdot17^{r/2},\\
\frac{1}{5}\cdot5^{(s-1)/2}\cdot\lfloor42\cdot17^{(r-2)/2}\rfloor,\\
\frac{4}{5}\cdot5^{(s-1)/2}\cdot17^{r/2},\\ 
2\cdot5^{(s-1)/2}\cdot17^{(r-1)/2},\\
\frac{16}{5}\cdot5^{(s-1)/2}\cdot17^{(r-1)/2},\\
\frac{4}{5}\cdot5^{s/2}\cdot17^{(r-1)/2},\\
\lfloor\frac{32}{5}\cdot5^{(s-2)/2}\rfloor\cdot17^{(r-1)/2},
\end{cases}
\begin{cases}
10\cdot5^{s/2}\cdot17^{(r-2)/2},\\
\frac{16}{17}\cdot5^{s/2}\cdot17^{r/2},\\
20\cdot5^{(s-1)/2}\cdot17^{(r-2)/2},\\
\frac{32}{17}\cdot5^{(s-1)/2}\cdot17^{r/2},\\
5\cdot5^{(s-1)/2}\cdot\lfloor\frac{21}{17}\cdot17^{(r-1)/2}\rfloor,\\
10\cdot5^{(s-1)/2}\cdot17^{(r-1)/2},\\
5^{s/2}\cdot\lfloor\frac{42}{17}\cdot17^{(r-1)/2}\rfloor,\\
20\cdot5^{(s-2)/2}\cdot17^{(r-1)/2},
\end{cases}\]
\newpage
\[f(k-1,s,r-1)=~~~~~~~~~~~~~~f(k,s,r-1)=~~~~~~~~~~~~~~~f(k-1,s-1,r-1)=~~~~~~~\]
\[\begin{cases}
4\cdot5^{s/2}\cdot17^{(r-2)/2},\\
\frac{1}{17}\cdot\lfloor\frac{32}{5}\cdot5^{s/2}\rfloor\cdot17^{r/2},\\
10\cdot5^{(s-1)/2}\cdot17^{(r-2)/2},\\
\frac{16}{17}\cdot5^{(s-1)/2}\cdot17^{r/2},\\
5^{(s-1)/2}\cdot\lfloor\frac{42}{17}\cdot17^{(r-1)/2}\rfloor,\\
4\cdot5^{(s-1)/2}\cdot17^{(r-1)/2},\\
5^{s/2}\cdot\lfloor\frac{21}{17}\cdot17^{(r-1)/2}\rfloor,\\
10\cdot5^{(s-2)/2}\cdot17^{(r-1)/2}, 
\end{cases}
\begin{cases}
\lfloor\frac{32}{5}\cdot5^{s/2}\rfloor\cdot17^{(r-2)/2},\\
\frac{1}{17}\cdot\lfloor\frac{32}{5}\cdot5^{s/2}\rfloor\cdot17^{r/2},\\
16\cdot5^{(s-1)/2}\cdot17^{(r-2)/2},\\
\frac{16}{17}\cdot5^{(s-1)/2}\cdot17^{r/2},\\
4\cdot5^{(s-1)/2}\cdot17^{(r-1)/2},\\
4\cdot5^{(s-1)/2}\cdot17^{(r-1)/2},\\
2\cdot5^{s/2}\cdot17^{(r-1)/2},\\
10\cdot5^{(s-2)/2}\cdot17^{(r-1)/2},
\end{cases}
\begin{cases}
\frac{16}{5}\cdot5^{s/2}\cdot17^{(r-2)/2},\\
\frac{16}{85}\cdot5^{s/2}\cdot17^{r/2},\\
\lfloor\frac{32}{5}\cdot5^{(s-1)/2}\rfloor\cdot17^{(r-2)/2},\\
\frac{1}{17}\cdot\lfloor\frac{32}{5}\cdot5^{(s-1)/2}\rfloor\cdot17^{r/2},\\
2\cdot5^{(s-1)/2}\cdot17^{(r-1)/2},\\
2\cdot5^{(s-1)/2}\cdot17^{(r-1)/2},\\
\frac{4}{5}\cdot5^{s/2}\cdot17^{(r-1)/2},\\
4\cdot5^{(s-2)/2}\cdot17^{(r-1)/2},
\end{cases}\]

\[~f(k-2,s-1,r-1)=~~~~~~~~~f(k,s+2,r-2)=~~~~~~~~~~~~~~~~f(k,s+1,r-2)=~~~~~~~~~\]
\[\begin{cases}
2\cdot5^{s/2}\cdot17^{(r-2)/2},\\
\frac{16}{85}\cdot5^{s/2}\cdot17^{r/2},\\
4\cdot5^{(s-1)/2}\cdot17^{(r-2)/2},\\
\frac{1}{17}\cdot\lfloor\frac{32}{5}\cdot5^{(s-1)/2}\rfloor\cdot17^{r/2},\\
5^{(s-1)/2}\cdot\lfloor\frac{21}{17}\cdot17^{(r-1)/2}\rfloor,\\
2\cdot5^{(s-1)/2}\cdot17^{(r-1)/2},\\
\frac{1}{5}\cdot5^{s/2}\cdot\lfloor\frac{42}{17}\cdot17^{(r-1)/2}\rfloor,\\
4\cdot5^{(s-2)/2}\cdot17^{(r-1)/2},
\end{cases}
\begin{cases}
5\cdot5^{s/2}\cdot\lfloor\frac{21}{17}\cdot17^{(r-2)/2}\rfloor,\\
\frac{10}{17}\cdot5^{s/2}\cdot17^{r/2},\\
5\cdot5^{(s-1)/2}\cdot\lfloor\frac{42}{17}\cdot17^{(r-2)/2}\rfloor,\\
\frac{20}{17}\cdot5^{(s-1)/2}\cdot17^{r/2},\\
\frac{50}{17}\cdot5^{(s-1)/2}\cdot17^{(r-1)/2},\\
\frac{80}{17}\cdot5^{(s-1)/2}\cdot17^{(r-1)/2},\\
\frac{20}{17}\cdot5^{s/2}\cdot17^{(r-1)/2},\\
\frac{160}{17}\cdot5^{(s-2)/2}\cdot17^{(r-1)/2}, 
\end{cases}
\begin{cases}
4\cdot5^{s/2}\cdot17^{(r-2)/2},\\
\frac{4}{17}\cdot5^{s/2}\cdot17^{r/2},\\
10\cdot5^{(s-1)/2}\cdot17^{(r-2)/2},\\
\frac{10}{17}\cdot5^{(s-1)/2}\cdot17^{r/2},\\
\frac{32}{17}\cdot5^{(s-1)/2}\cdot17^{(r-1)/2},\\
\frac{32}{17}\cdot5^{(s-1)/2}\cdot17^{(r-1)/2},\\
\frac{16}{17}\cdot5^{s/2}\cdot17^{(r-1)/2},\\
\frac{80}{17}\cdot5^{(s-2)/2}\cdot17^{(r-1)/2},
\end{cases}\]

\[f(k-1,s+1,r-2)=~~~~~~~~~~f(k-2,s,r-2)=~~~~~~~~~~~f(k,s,r-2)=f(k-1,s,r-2)=\]
\[\begin{cases}
5^{s/2}\cdot\lfloor\frac{42}{17}\cdot17^{(r-2)/2}\rfloor,\\
\frac{4}{17}\cdot5^{s/2}\cdot17^{r/2},\\
5\cdot5^{(s-1)/2}\cdot\lfloor\frac{21}{17}\cdot17^{(r-2)/2}\rfloor,\\
\frac{10}{17}\cdot5^{(s-1)/2}\cdot17^{r/2},\\
\frac{20}{17}\cdot5^{(s-1)/2}\cdot17^{(r-1)/2},\\
\frac{32}{17}\cdot5^{(s-1)/2}\cdot17^{(r-1)/2},\\
\frac{10}{17}\cdot5^{s/2}\cdot17^{(r-1)/2},\\
\frac{80}{17}\cdot5^{(s-2)/2}\cdot17^{(r-1)/2}, 
\end{cases}
\begin{cases}
5^{s/2}\cdot\lfloor\frac{21}{17}\cdot17^{(r-2)/2}\rfloor,\\
\frac{2}{17}\cdot5^{s/2}\cdot17^{r/2},\\
5^{(s-1)/2}\cdot\lfloor\frac{42}{17}\cdot17^{(r-2)/2}\rfloor,\\
\frac{4}{17}\cdot5^{(s-1)/2}\cdot17^{r/2},\\
\frac{10}{17}\cdot5^{(s-1)/2}\cdot17^{(r-1)/2},\\
\frac{16}{17}\cdot5^{(s-1)/2}\cdot17^{(r-1)/2},\\
\frac{4}{17}\cdot5^{s/2}\cdot17^{(r-1)/2},\\
\frac{1}{17}\cdot\lfloor32\cdot5^{(s-2)/2}\rfloor\cdot17^{(r-1)/2},
\end{cases}
\begin{cases}
2\cdot5^{s/2}\cdot17^{(r-2)/2},\\
\frac{2}{17}\cdot5^{s/2}\cdot17^{r/2},\\
4\cdot5^{(s-1)/2}\cdot17^{(r-2)/2},\\
\frac{4}{17}\cdot5^{(s-1)/2}\cdot17^{r/2},\\
\frac{16}{17}\cdot5^{(s-1)/2}\cdot17^{(r-1)/2},\\
\frac{16}{17}\cdot5^{(s-1)/2}\cdot17^{(r-1)/2},\\
\frac{1}{17}\cdot\lfloor\frac{32}{5}\cdot5^{s/2}\rfloor\cdot17^{(r-1)/2},\\
\frac{1}{17}\cdot\lfloor32\cdot5^{(s-2)/2}\rfloor\cdot17^{(r-1)/2}.
\end{cases}\]


\begin{thebibliography}{99}
\bibitem{MC} M. Clancy, Some small Ramsey numbers, J. Graph Theory 1 (1977), 89-91.
%
\vspace {-0.25cm}
%
\bibitem{chgr} F. R. K. Chung and R. Graham, Edge-colored complete graphs with precisely colored subgraphs, Combinatorica 3 (1983) 315--324.
%
\vspace {-0.25cm}
%
\bibitem{4} R.J. Faudree, R. Gould, M. Jacobson and C. Magnant, Ramsey numbers in rainbow triangle free colorings, Australas. J. Combin. 46 (2010) 269--284.
%
\vspace {-0.25cm}
%
\bibitem{FGP} J. Fox, A. Grinshpun and J. Pach, The Erd\H{o}s-Hajnal conjecture for rainbow triangles, J. Combin. Theory Ser. B 111 (2015) 75--125.
%
\vspace {-0.25cm}
%
\bibitem{FMO} S. Fujita, C. Magnant and K. Ozeki, Rainbow generalizations of Ramsey theory: a survey, Graphs Combin. 26 (2010) 1--30.
%
\vspace {-0.25cm}
%
\bibitem{GG} R.E. Greenwood and A.M. Gleason, Combinatorial Relations and Chromatic Graphs, Canadian Journal of Mathematics, 7 (1955) 1-7.
%
\vspace {-0.25cm}
%
\bibitem{Gallai} T. Gallai, Transitiv orientierbare Graphen, Acta Math. Acad. Sci. Hung. 18 (1967) 25--66.
%
\vspace {-0.25cm}
%
\bibitem{exponential} A. Gy\'{a}rf\'{a}s, G.  S\'{a}rk\"{o}zy, A. Seb\H{o} and S. Selkow, Ramsey-type results for Gallai colorings, J. Graph Theory 64 (2010) 233--243.
%
\vspace {-0.25cm}
%
\bibitem{Gy} A. Gy\'arf\'as and G. Simonyi, Edge colorings of complete graphs without tricolored triangles, J. Graph Theory 46 (2004) 211--216.
%
\vspace {-0.25cm}
%
\bibitem{R2} G.R.T. Hendry, Ramsey numbers for graphs with five vertices, J. Graph Theory, 13 (1989) 245--248.
%
\vspace {-0.25cm}
%
\bibitem{five} X-H. Li and L-G. Wang, Gallai-Ramsey numbers for a class of graphs with five vertices, Graphs Combin (2020).
%
\vspace {-0.25cm}
%
\bibitem{k4} H. Liu, C. Magnant, A. Saito, I. Schiermeyer and Y. Shi. Gallai-Ramsey number for $K_4$, J. Graph Theory 94 (2020) 192--205.
%
\vspace {-0.25cm}
%
\bibitem{colt} C. Magnant and I. Schiermeyer, Gallai-Ramsey number for $K_5$, arXiv:1901.03622v1. 
%
\vspace {-0.25cm}
%
\bibitem{YM} Y. Mao, Z. Wang, C. Magnant and I. Schiermeyer, Gallai-Ramsey numbers for fans, arXiv:1902.10706v1. 
%
\vspace {-0.25cm}
%
\bibitem{YM1}  Y. Mao, Z. Wang, C. Magnant and I. Schiermeyer, Ramsey and Gallai-Ramsey number for wheels, arXiv:1905.12414.
%
\vspace {-0.25cm}
%
\bibitem{SPR} S.P. Radziszowski, Small Ramsey numbers, Electron J. Combin. 1 (2017), Dynamic Survey 1, (electronic), 2017.
%
\vspace {-0.25cm}
%
\bibitem{Kapps} A.N.M. Salman and H.J. Broersma, The Ramsey Numbers of Paths versus Kipases, Electronic Notes in Discrete Mathematics, 17 (2004) 251-255.
%
\vspace {-0.25cm}
%
\bibitem{W2N} Z-X. Song, B. Wei, F. Zhang and Q. Zhao, A note on Gallai-Ramsey number of even wheels, Discrete Math. 343 (2020) 111725.
%
\vspace {-0.25cm} 
%
\bibitem{HW} H. Wu, C. Magnant, P. S. Nowbandegani, and S. Xia, All partitions have small parts Gallai-Ramsey numbers of bipartite graphs, Discrete Applied Mathematics (2018).
%
\vspace {-0.25cm} 
%
\bibitem{17} Z. Wang, Y. Mao, C. Magnant and J. Zou, Ramsey and Gallai-Ramsey numbers for two classes of unicyclic graphs, arXiv:1809.10298.
%
\vspace {-0.25cm}
%
\bibitem{JZ} J. Zou, Y. Mao, C. Magnant, Z. Wang and C.F. Ye, Gallai-Ramsey numbers for books, Discrete Appl. Math. 268 (2019) 164-177.
%
\vspace {-0.25cm}
%
\bibitem{ZF} F. Zhang, Z-X. Song and Y. Chen, Multicolor Ramsey numbers of cycles in Gallai colorings, arXiv:1906.05263.
%
\vspace {-0.25cm}
%
\bibitem{ZQ} Q. Zhao and B. Wei, Gallai-Ramsey numbers for graphs with chromatic number three, arXiv:2006.02603v1.
\end{thebibliography}
\end{document}